\documentclass[a4paper]{article}
\usepackage[affil-it]{authblk}
\usepackage{amsthm,amsmath,amssymb,amsfonts,mathrsfs}
\usepackage[square, numbers, comma]{natbib}
\usepackage{enumerate}
\usepackage{amsbsy}
\usepackage{amsfonts}
\usepackage[all,cmtip]{xy}
\usepackage{tikz}

\makeatletter
\renewcommand\paragraph{\@startsection{paragraph}{4}{\z@}%
            {-2.5ex\@plus -1ex \@minus -.25ex}%
            {1.25ex \@plus .25ex}%
            {\normalfont\normalsize\bfseries}}
            \makeatother
\setcounter{secnumdepth}{4} 
\setcounter{tocdepth}{4} 

\usetikzlibrary{arrows,matrix}

\usepackage{pb-diagram,pb-xy}
   \bibpunct{[}{]}{,}{n}{}{,}
\input cyracc.def  

\def\Q{\mathbb{Q}}
\def\F{\mathbb{F}}

\def\Z{\mathbb{Z}}

\def\p{\mathfrak{p}}
\def\q{\mathfrak{q}}

\def\Aut{\mathrm{Aut}}

\def\Gal{\mathrm{Gal}}

\def\dim{\mathrm{dim}}

\def\Cl{\mathrm{Cl}}
\def\GL{\mathrm{GL}}
\def\PGL{\mathrm{PGL}}
\def\SL{\mathrm{SL}}
\def\PSL{\mathrm{PSL}}

\newcommand{\sm}{\left(\begin{smallmatrix}}
\newcommand{\esm}{\end{smallmatrix}\right)}

\newtheorem{thm}{Theorem}[section]
\newtheorem{lem}[thm]{Lemma}
\newtheorem{prop}[thm]{Proposition}
\newtheorem{cor}[thm]{Corollary}

\theoremstyle{definition}
\newtheorem{dfn}[thm]{Definition}

\theoremstyle{remark}

\numberwithin{equation}{section}



\begin{document}
\title{Some examples of quadratic fields with finite nonsolvable maximal unramified extensions II}

\author{Kwang-Seob Kim\thanks{Email: \texttt{kwang12@kias.re.kr}, School of Mathematics, Korea Institute for Advanced Study, Seoul 130-722, Korea} \ 
and
Joachim K\"onig\thanks{
Email: \texttt{koenig.joach@technion.ac.il}, Department of Mathematics, Technion Israel Institute of Technology, 32000 Haifa, Israel
\\
2010 Mathematics Subject Classification : Primary 11R37 ; Secondary 11F80 and 11R29\\
Key words and phrases : Nonsolvable unramified extensions of number fields, Class number one problems
}
}




\maketitle

\begin{abstract}
Let $K$ be a number field and $K_{ur}$ be the maximal extension of $K$ that is unramified at all places. 
In the previous article \cite{Kim-2016}, the first author found three real quadratic fields $K$ such that $\Gal(K_{ur}/K)$ is finite and nonabelian simple under the assumption of the GRH(Generalized Riemann Hypothesis). In this article, we will identify more quadratic number fields $K$ such that $\Gal(K_{ur}/K)$ is a finite nonsolvable group and also explicitly calculate their Galois groups under the assumption of the Generalized Riemann Hypothesis.
\end{abstract}

\section{Introduction}
This is a continuation of \cite{Kim-2016}. Let $K$ be a number field and $K_{ur}$ be the maximal extension of $K$ that is unramified at \emph{all places}. In \cite{Yamamura-1997}, Yamamura showed that $K_{ur}=K_l$, where $K$ denotes an imaginary quadratic field with absolute  discriminant value $|d_K| \leq 420$, and $K_l$ is the top of the class field tower of $K$ and also computed $\Gal(K_{ur}/K)$. Hence, we can find examples of abelian or solvable \'etale fundamental groups. It is then natural to wonder whether we can find  examples with the property that $\Gal(K_{ur}/K)$ is a finite nonsolvable group. In the previous article \cite{Kim-2016}, we present three explicit examples that provide an affirmative answer.

In this article, we will identify two more quadratic number fields $K$ such that $\Gal(K_{ur}/K)$ is a finite nonsolvable group and also explicitly calculate their Galois groups under the GRH. Under the assumption of GRH, we will show that $\Gal(K_{ur}/K)$ is isomorphic to a finite nonsolvable group when $K=\Q(\sqrt{22268})$ (Theorem \ref{thm:22268}) and when $K=\Q(\sqrt{-1567})$ (Theorem \ref{thm:1567}).

In particular, to the best of the authors' knowledge, $K=\Q(\sqrt{-1567})$ is the first example of an imaginary quadratic field which has a nonsolvable unramified extension and for which $\Gal(K_{ur}/K)$ is explicitly calculated.\\
\emph{Tools used for the proof}: To identify certain unramified extensions with nonsolvable Galois groups, we use the database of number fields created by J\"urgen Kl\"uners and Gunter Malle \cite{Kluners}.
To exclude further unramified extensions, we use a wide variety of tools, including class field theory, Odlyzko's discriminant bounds, results about low degree number fields with small discriminants, and various group-theoretical results. In particular, the group-theoretical arguments are far more involved than in the previous paper \cite{Kim-2016}.





\section{Preliminaries}\label{Preliminaries}
\subsection{The action of Galois groups on class groups}
If $A$ is a finite abelian $p$-group, then $A \simeq \oplus \Z/p^{a_i}\Z$ for some integers $a_i$. Let
\begin{displaymath}
n_a=\textrm{ number of }i\textrm{ with }a_i=a,
\end{displaymath}
\begin{displaymath}
r_a=\textrm{ number of }i\textrm{ with }a_i\geq a.
\end{displaymath}
Then
\begin{displaymath}
r_1=p\textrm{-rank }A\textrm{ }=\dim_{\Z/p\Z}(A/A^p)
\end{displaymath}
and, more generally,
\begin{displaymath}
r_a=\dim_{\Z/p\Z}(A^{p^{a-1}}/A^{p^a}).
\end{displaymath}
The action of Galois groups on class groups can often be used to obtain useful information on the structure of lass groups. We review the following lemma, often called $p$-rank theorem.
\begin{lem} \label{2.5}
(Theorem 10.8 of \cite{Washington-1982}) Let $L/K$ be a cyclic of degree $n$. Let $p$ be a prime, $p \nmid n$ and assume that all fields $E$ with $K \subseteqq E \varsubsetneqq L$ satisfy $p \nmid \mathrm{Cl}(E)$. Let $A$ be the $p$-Sylow subgroup of the ideal class group of $L$, and let $f$ be the order of $p$ mod $n$. Then
\begin{displaymath}
r_a \equiv n_a \equiv 0 \textrm{ mod }f
\end{displaymath}
for all $a$, where $r_a$ and $n_a$ are as above. In particular, if $p | \mathrm{Cl}(L)$ then the $p$-rank of $A$ is at least $f$ and $p^f | \mathrm{Cl}(L)$.
\end{lem}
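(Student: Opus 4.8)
The plan is to exploit the action of $G := \Gal(L/K) \cong \Z/n\Z$ on the finite $p$-group $A$ together with the coprimality $p \nmid n$. Because $p \nmid n$, the polynomial $x^n - 1$ is separable modulo $p$, so
$\Z_p[G] \cong \Z_p[x]/(x^n-1) \cong \prod_{d \mid n} \Z_p[x]/(\Phi_d(x))$,
and each $\Phi_d$ (separable mod $p$) splits over $\Z_p$ into $\varphi(d)/f_d$ monic irreducible factors of degree $f_d := \mathrm{ord}_d(p)$, each quotient ring being isomorphic to $\mathcal{O}_d$, the ring of integers of the unramified extension of $\Q_p$ of degree $f_d$. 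Hence $\Z_p[G] \cong \prod_{d\mid n}(\mathcal{O}_d)^{\varphi(d)/f_d}$, and correspondingly $A = \bigoplus_{d\mid n} A^{(d)}$, where $A^{(d)} := e^{(d)}A$ for the orthogonal idempotents $e^{(d)} \in \Z_p[G]$ cutting out the $\Phi_d$-parts.

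First I would show $A^{(d)} = 0$ for every proper divisor $d$ of $n$. Fix such a $d$; let $H \leq G$ be the subgroup of order $m := n/d$ and $E := L^H$, so $E/K$ is cyclic of degree $d$ with group $G/H$, and $E \varsubsetneq L$. Consider the conorm (ideal extension) map $j \colon \Cl(E) \to \Cl(L)$ and the ideal norm $N \colon \Cl(L) \to \Cl(E)$, both $\Z_p[G]$-equivariant (with $G$ acting on $\Cl(E)$ through $G/H$). Since $N \circ j$ is multiplication by $[L:E] = m$, a $p$-adic unit, $j$ is injective and $N$ surjective on $p$-parts; and $j \circ N$, acting on $\Cl(L)$, is the group-ring element $\sum_{\tau \in H}\tau = m\,e_H$, where $e_H := \frac{1}{m}\sum_{\tau\in H}\tau$ is the idempotent with $\Z_p[G]e_H \cong \Z_p[G/H]$. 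Consequently the image of $j$ on $p$-parts is exactly $e_H A$, and since after $\otimes_{\Z_p}\Q_p$ the surjection $\Z_p[G] \to \Z_p[G/H]$ is projection onto the factors indexed by the divisors $d'$ of $d$, we get $e_H = \sum_{d'\mid d}e^{(d')}$. Therefore $\bigoplus_{d'\mid d}A^{(d')} \cong \Cl(E)_p$ as $\Z_p[G]$-modules, and the right-hand side vanishes by hypothesis. In particular $A^{(d)} = 0$; letting $d$ range over the proper divisors of $n$ gives $A = A^{(n)}$.

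It remains to read off the elementary divisors of $A$. By the previous step $A$ is a module over $R^{(n)} \cong \prod_{i=1}^{s}\mathcal{O}_i$, a product of $s := \varphi(n)/f$ copies of $\mathcal{O}_n$, with $f := f_n = \mathrm{ord}_n(p)$; hence $A = \bigoplus_{i=1}^s e_i A$ with each $e_i A$ a finite module over the discrete valuation ring $\mathcal{O}_i \cong \mathcal{O}_n$, so $e_i A \cong \bigoplus_j \mathcal{O}_i/p^{b_{ij}}\mathcal{O}_i$ — here the uniformizer of $\mathcal{O}_i$ may be taken to be $p$, because $\mathcal{O}_n/\Q_p$ is \emph{unramified}. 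As an abelian group $\mathcal{O}_i/p^b\mathcal{O}_i \cong (\Z/p^b\Z)^f$, since $\mathcal{O}_i$ is free of rank $f$ over $\Z_p$. Thus every cyclic summand $\Z/p^a\Z$ of $A$ occurs in a packet of exactly $f$ isomorphic copies, whence $n_a \equiv 0$ and $r_a = \sum_{a' \geq a}n_{a'} \equiv 0 \pmod f$ for all $a$. The last assertion is then immediate: if $p \mid |\Cl(L)|$ then $A \neq 0$, so some $n_a$ is positive and hence $\geq f$; therefore the $p$-rank $\sum_a n_a \geq f$, and $|A| = p^{\sum_a a\,n_a} \geq p^{\sum_a n_a} \geq p^f$ divides $|\Cl(L)|$.

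The main obstacle lies in the second paragraph, and it is bookkeeping rather than conceptual. One must check that $j$ and $N$ are genuinely $\Z_p[G]$-equivariant, that $j\circ N$ really is the group-ring element $\sum_{\tau\in H}\tau$ (via the standard identity $N_{L/E}(\mathfrak{b})\mathcal{O}_L = \prod_{\tau\in H}\mathfrak{b}^\tau$ for the Galois extension $L/E$), and — most delicately — that under the decomposition $\Z_p[G] \cong \prod_{d\mid n}(\mathcal{O}_d)^{\varphi(d)/f_d}$ the idempotent $e_H$ matches $\sum_{d'\mid d}e^{(d')}$. It is precisely here that the cyclicity of $G$ is essential: it forces each intermediate field $E$ to be cyclic over $K$ as well, so that $\Z_p[G/H]\otimes_{\Z_p}\Q_p$ is visibly the product of the factors indexed by the divisors of $d$, and this is exactly what lets the hypothesis on the intermediate class numbers annihilate every component of $A$ except the top one.
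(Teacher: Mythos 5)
Your argument is correct: the decomposition of $\Z_p[\Gal(L/K)]$ into unramified extensions of $\Z_p$, the use of the norm and conorm maps to intermediate fields to annihilate the components indexed by proper divisors of $n$, and the final reduction to the structure theory of finite modules over the degree-$f$ unramified extension of $\Z_p$ all check out. The paper itself gives no proof but simply cites Theorem 10.8 of Washington, and your write-up is essentially a faithful reconstruction of that standard argument, so no further comparison is needed.
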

\subsection{A remark on the class field tower}
\begin{lem} \label{2.6}
(Theorem 1 of \cite{Taussky-1937}) Let $K$ be an algebraic number field of finite degree and $p$ any prime number. If the $p$-class group, i.e., the $p$-part of the class group of $K$ is cyclic, then the $p$-class group of the Hilbert $p$-class field of $K$ is trivial. Moreover, if $p=2$ and the $2$-class group of $K$ is isomorphic to $V_4$, then the $2$-class group of the Hilbert $2$-class field of $K$ is cyclic.
\end{lem}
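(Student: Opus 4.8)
The plan is to reduce both statements to group theory by passing to the second step of the Hilbert $p$-class field tower. Let $L$ be the Hilbert $p$-class field of $K$ and $M$ the Hilbert $p$-class field of $L$. Both $L/K$ and $M/L$ are unramified at all places, hence so is $M/K$; and $M/K$ is Galois, since $M$ is the maximal unramified abelian $p$-extension of $L$ and $L/K$ is Galois. Put $G=\Gal(M/K)$. As $M/K$ is an unramified $p$-extension, $|G|=|\Cl_p(K)|\cdot|\Cl_p(L)|$ is a power of $p$, so $G$ is a finite $p$-group. The maximal abelian subextension of $M/K$ is an unramified abelian $p$-extension of $K$, hence lies in $L$, and it obviously contains $L$; thus $G'=\Gal(M/L)$, and by class field theory $G/G'\cong\Cl_p(K)$ while $G'\cong\Cl_p(L)$ is abelian, i.e.\ $G''=1$. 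The lemma thus asserts: (i) if $G/G'$ is cyclic then $G'=1$; (ii) if $p=2$ and $G/G'\cong V_4$ then $G'$ is cyclic. Part (i) is immediate, since a finite $p$-group with cyclic abelianization is cyclic: its Frattini quotient $G/\Phi(G)$ is a quotient of the cyclic group $G/G'$ and is elementary abelian, hence has order at most $p$, so $G$ is generated by a single element, and therefore $G'=1$.

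For part (ii) the key input is the group-theoretic principal ideal theorem of Furtw\"angler: since $G$ is finite, the transfer (Verlagerung) $\mathrm{Ver}\colon G/G'\to G'/G''=G'$ is trivial. Because $G/G'\cong V_4$ has exponent $2$ we get $\Phi(G)=G^2G'=G'$, so $G$ is $2$-generated; choosing $a,b\in G$ lifting a basis of $G/G'$ gives $G=\langle a,b\rangle$. Then $G'$ is the normal closure of $c:=[a,b]$, and since $G'$ is abelian the conjugation action of $G$ on $G'$ factors through $G/G'$, making $G'$ a cyclic $\Z[G/G']$-module generated by $c$. Write $s,t\in V_4$ for the images of $a,b$. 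Computing $\mathrm{Ver}(\bar a)$ and $\mathrm{Ver}(\bar b)$ --- most easily via transitivity of the transfer through the index-$2$ subgroups $\langle a,G'\rangle$ and $\langle b,G'\rangle$, and using that $a^2,b^2\in G'$ are centralized by $a$, resp.\ $b$ --- the triviality of the transfer yields relations of the form $(s+1)c=\pm 2a^2$ and $(t+1)c=\pm 2b^2$ in the abelian group $G'$. Reducing modulo $2G'$, we see that $s$ and $t$ act trivially on the image $\bar c$ of $c$ in $G'/2G'$; since $\bar c$ generates $G'/2G'$ as a $\Z[V_4]$-module, $G'/2G'$ has order at most $2$. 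A finite abelian $2$-group whose quotient by its squares is cyclic is itself cyclic, so $G'\cong\Cl_2(L)$ is cyclic.

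The class field theory identifications of $G/G'$ and $G'$, and the two elementary group-theoretic facts used in part (i) and at the close of part (ii), are routine. The crux --- and the step I expect to be the main obstacle --- is part (ii): one has to carry out the transfer computation with care, in particular checking that $G$ is genuinely $2$-generated and that $a^2$ and $b^2$ are centralized by $a$, resp.\ $b$, so that the transfer relations collapse to the stated form, and then recognize that those relations say exactly that the $\Z[V_4]$-module $G'/2G'$ is cyclic.
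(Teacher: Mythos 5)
The paper gives no proof of this lemma --- it is quoted verbatim as Theorem 1 of Taussky's 1937 paper --- so the only comparison is with the original source; your argument is correct and is essentially Taussky's own: identify $\Cl_p(K)$ and $\Cl_p(L)$ with $G/G'$ and $G'$ for the metabelian $p$-group $G=\Gal(M/K)$, dispose of the cyclic case via the Burnside basis argument, and in the $V_4$ case combine Furtw\"angler's group-theoretic principal ideal theorem with the explicit transfer relations $(1+s)c=-2a^2$ and $(1+t)c=-2b^2$ to force $G'/2G'$, hence $G'$, to be cyclic. The steps you flag as delicate all check out: $\Phi(G)=G^2G'=G'$ gives $2$-generation, $G'$ is the normal closure of $[a,b]$ and hence a cyclic $\Z[V_4]$-module, and the transfer computed through the index-$2$ subgroup $\langle a,G'\rangle$ does collapse to $a^4c^{1+s}$.
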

\subsection{Root discriminant}
Let $K$ be a number field. We define the \textit{root discriminant} of $K$ to be $|d_K|^{1/n_K}$, where $n_K$ is $[K:\Q]$. Given a tower of number fields $L/K/F$, we have the following equality for the ideals of $F$:
\begin{equation}
\begin{split} \label{2-1}
d_{L/F} = (d_{K/F})^{[L:K]} N_{K/F}(d_{L/K}),
\end{split}
\end{equation}
where $d_{L/F}$ denotes the relative discriminant (see Corollary 2.10 of \cite{Neukirch-1999}). Set $F=\Q$. It follows from (\ref{2-1}) that, if $L$ is an extension of $K$,  $|d_K|^{1/n_K} \leq |d_L|^{1/n_L}$, with equality if and only if $d_{L/K}=1$, i.e., $L/K$ is unramified at \emph{all finite places}.
\subsection{Discriminant bounds} \label{Discriminant bounds}
In this section, we describe how the discriminant bound is used to determine that a field has no nonsolvable unramified extensions.
\subsubsection{Crucial proposition}

Consider the following proposition, in which $K_{ur}$ is the maximal extension of $K$ that is unramified over all primes.
\begin{prop} \label{prop:y}
(Proposition 1 of \cite{Yamamura-1997}) Let $B(n_K, r_1, r_2)$ be the lower bound for the root discriminant of $K$ of degree $n_K$ with signature $(r_1, r_2)$.  Suppose that $K$ has an unramified normal extension $L$ of degree $m$. If $\mathrm{Cl}(L) = 1$, where $\mathrm{Cl}(L)$ is the class number of $L$, and $|d_K|^{1/n_K} < B(60mn_K, 60mr_1, 60mr_2)$, then $K_{ur} = L$.
\end{prop}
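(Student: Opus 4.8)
The plan is to argue by contradiction: assuming $K_{ur}\supsetneq L$, I will manufacture from this a number field whose root discriminant is too small to be allowed by the Odlyzko-type bound $B$, contradicting the hypothesis.

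First I would analyze the top of the tower over $L$. Since $K_{ur}/K$ is Galois (the compositum of the $K$-conjugates of any everywhere-unramified extension is again everywhere-unramified, and $K_{ur}$ contains all of them) and $L\subseteq K_{ur}$, the extension $K_{ur}/L$ is Galois; put $G=\Gal(K_{ur}/L)$. Any finite abelian continuous quotient of $G$ is the Galois group of an abelian extension of $L$ unramified at all places, hence a quotient of $\Cl(L)$ (the Hilbert class field being the maximal such extension); since $\Cl(L)=1$ this forces $G=\overline{[G,G]}$. If $G\neq 1$, choose a proper open normal subgroup $N$ of $G$ and then a maximal normal subgroup of the nontrivial finite group $G/N$; this produces a finite simple quotient $S$ of $G$. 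As $G$ is perfect, $S$ cannot be cyclic of prime order, so $S$ is nonabelian and hence $|S|\geq 60=|A_5|$. Letting $M$ be the fixed field in $K_{ur}$ of the kernel of the map $G\twoheadrightarrow S$, we obtain $L\subseteq M\subseteq K_{ur}$ with $\Gal(M/L)\cong S$; in particular $M/K$ is unramified at all places and $[M:L]=|S|\geq 60$.

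Next I would convert this into a statement about root discriminants. Because $M/K$ is unramified at all finite places, equation (\ref{2-1}) with $F=\Q$ gives $|d_M|^{1/n_M}=|d_K|^{1/n_K}$. Because $M/K$ is also unramified at the infinite places, each real (resp.\ complex) place of $K$ lies below exactly $[M:K]$ real (resp.\ complex) places of $M$, so $M$ has signature $([M:K]\,r_1,\ [M:K]\,r_2)$, while $n_M=[M:K]\,n_K=[M:L]\,m\,n_K\geq 60\,m\,n_K$. Invoking the monotonicity of Odlyzko's discriminant lower bounds along a ray of fixed signature proportions — namely that $B(cn_K,cr_1,cr_2)$ is nondecreasing in the positive integer $c$, applied with $c=[M:K]\geq 60m$ — we get
\[
|d_K|^{1/n_K}=|d_M|^{1/n_M}\ \geq\ B\big(n_M,\,[M:K]r_1,\,[M:K]r_2\big)\ \geq\ B\big(60mn_K,\,60mr_1,\,60mr_2\big),
\]
which contradicts the hypothesis $|d_K|^{1/n_K}<B(60mn_K,60mr_1,60mr_2)$. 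Hence $G=1$ and $K_{ur}=L$.

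Among these steps, the class-field-theory identification of abelian quotients of $G$ and the archimedean place count are routine, and the one input I would simply cite rather than reprove is the monotonicity of Odlyzko's root-discriminant bounds in the degree. The conceptual crux — and the source of the factor $60$ — is the elementary but essential fact that a nontrivial perfect (pro)finite group admits a nonabelian finite simple quotient, so that the smallest nontrivial step of $K_{ur}/L$ already has degree $\geq 60$; this is exactly what allows $n_K$ to be replaced by $60mn_K$ and thereby overshoot the discriminant bound. I do not anticipate a serious obstacle beyond being careful that $M/K$ need not be Galois, which is harmless here since the discriminant and signature computations use only unramifiedness, not normality (alternatively one may pass to the Galois closure of $M$ over $K$, which still lies in $K_{ur}$).
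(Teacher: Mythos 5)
Your proof is correct and follows essentially the same route as the source: the paper does not reprove this proposition but quotes it from Yamamura, whose argument is exactly yours --- $\Cl(L)=1$ makes $\Gal(K_{ur}/L)$ perfect, so any nontrivial step above $L$ inside $K_{ur}$ yields a nonabelian simple quotient of order at least $60$, hence a field of degree at least $60mn_K$ with the same root discriminant and scaled signature, contradicting the (monotone in degree) Odlyzko-type bound $B$. The only ingredient you cite rather than prove, the monotonicity of $B(cn_K,cr_1,cr_2)$ in $c$, is a standard property of these bounds and is likewise taken for granted in Yamamura's proof, so there is no gap.
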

If the GRH is assumed, much better bounds can be obtained. 
The lower bounds for number fields are stated in Martinet's expository paper \cite{Martinet-1980}.
\subsubsection{Description of Table III of \cite{Martinet-1980}}
Table III of \cite{Martinet-1980} describes the following. If $K$ is an algebraic number field with $r_1$ real and $2r_2$ complex conjugate fields, and $d_K$ denotes the absolute value of the discriminant of $K$, then, for any $b$, we have
\begin{equation}
\begin{split} \label{D}
d_K>A^{r_1}B^{2r_2}e^{f-E},
\end{split}
\end{equation}
where $A,B$, and $E$ are given in the table, and
\begin{equation}
\begin{split} \label{f}
f=2\sum_{\p}\sum^{\infty}_{m=1}\frac{\log N(\p)}{N(\p)^{m/2}}F(\log N(\p)^m),
\end{split}
\end{equation}
where the outer sum is taken over all prime ideals of $K$, $N$ is the norm from $K$ to $\Q$, and
\begin{displaymath}
F(x)=G(x/b)
\end{displaymath}
in the GRH case, where the even function $G(x)$ is given by
\begin{equation}
\begin{split}
G(x) = \Big(1- \frac{x}{2}\Big)\cos\frac{\pi}{2}x+\frac{1}{\pi}\sin\frac{\pi}{2}x
\end{split}
\end{equation}
for $0 \leq x \leq 2$ and $G(x) = 0$ for $x>2$.

The values of $A$ and $B$ are lower estimates; the values of $E$ have been rounded up from their true values, which are
\begin{equation}
\begin{split}
8\pi^2b\Big(\frac{e^{b/2}+e^{-b/2}}{\pi^2+b^2}\Big)^2
\end{split}
\end{equation}
in the GRH case.
\section{Some group theory} \label{group}
In this section, we recall some facts from  group theory.
\subsection{Schur multipliers and central extensions}
\begin{dfn}
The \emph{Schur multiplier} is the second homology group $H_2(G, \Z)$ of a group $G$.
\end{dfn}
\begin{dfn}
A \emph{stem extension} of a group $G$ is an extension
\begin{equation}
\begin{split}
1 \to H \to G_0 \to G \to 1,
\end{split}
\end{equation}
where $H \subset Z(G_0) \cap G'_0$ is a subgroup of the intersection of the center of $G_0$ and the derived subgroup of $G_0$.
\end{dfn}
If the group $G$ is finite and one considers only stem extensions, then there is a largest size for such a group $G_0$, and for every $G_0$ of that size the subgroup $H$ is isomorphic to the Schur multiplier of $G$. Moreover, if the finite group $G$ is perfect as well, then $G_0$ is unique up to isomorphism and is itself perfect. Such $G_0$ are often called \emph{universal perfect central extensions} of $G$, or \emph{covering groups}.

\begin{prop} \label{prop:B}
Let $H$ be a finite abelian group, and let $1 \to H \to G_0 \to G \to 1$ be a central extension of $G$ by $H$. Then either this extension is a stem extension, or $G_0$ has a non-trivial abelian quotient. \end{prop}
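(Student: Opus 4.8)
The plan is to unwind the definitions and observe that, since the given extension is central, the containment $H \subseteq Z(G_0)$ is automatic, so the only thing that can prevent $1 \to H \to G_0 \to G \to 1$ from being a stem extension is the failure of $H$ to lie in the derived subgroup $G_0'$. Concretely, the extension is a stem extension precisely when $H \subseteq Z(G_0) \cap G_0'$, and by centrality this is equivalent to $H \subseteq G_0'$. Hence the dichotomy in the statement reduces to: either $H \subseteq G_0'$ (stem case), or $H \not\subseteq G_0'$.

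So the substantive step is to handle the case $H \not\subseteq G_0'$. Here I would pass to the abelianization $G_0^{\mathrm{ab}} = G_0/G_0'$ with its canonical projection $\pi \colon G_0 \to G_0^{\mathrm{ab}}$. Since $\ker \pi = G_0'$ and $H \not\subseteq G_0'$ by assumption, the image $\pi(H)$ is a nontrivial subgroup of $G_0^{\mathrm{ab}}$; in particular $G_0^{\mathrm{ab}} \neq 1$. As $G_0^{\mathrm{ab}}$ is abelian by construction, it furnishes the required nontrivial abelian quotient of $G_0$, which finishes the argument.

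I do not expect any real obstacle here: the proof is essentially formal manipulation of the definitions, and the one point to be careful about is simply to invoke the hypothesis that the extension is \emph{central}, which is what renders $H \subseteq Z(G_0)$ free of charge and lets the condition collapse to $H \subseteq G_0'$. If one wants a slightly sharper conclusion for later use, it is worth recording that in the non-stem case $\pi(H)$ is itself a nontrivial abelian quotient of (a subgroup related to) $H$, i.e.\ $G_0$ has a nontrivial abelian quotient onto which $H$ maps nontrivially; this is the form in which the proposition is naturally applied, for instance to extract an unramified abelian subextension from a hypothetical central extension.
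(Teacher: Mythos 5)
Your argument is correct and is essentially identical to the paper's proof: centrality makes $H\subseteq Z(G_0)$ automatic, so failure of the stem condition means $H\not\subseteq G_0'$, and then the abelianization $G_0/G_0'$ is the required non-trivial abelian quotient. No gaps; your version just spells out the same one-line observation in more detail.
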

\begin{proof}
By definition, if the extension is not a stem extension, then $H \not\subseteq G'_0$, and thus $G_0/G'_0$ is a non-trivial abelian quotient.
\end{proof}

\begin{lem} \label {lem:A}
The Schur multiplier of $A_n$ is $C_2$ for $n=5$ or $n>7$ and it is $C_6$ for $n = 6$ or $7$.
\end{lem}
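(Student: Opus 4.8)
This is a classical theorem of Schur; in the paper one could simply cite it, but for completeness I sketch how a proof runs. The statement has two halves: a lower bound, obtained by exhibiting explicit central extensions, and a matching upper bound, which is where essentially all the work lies.

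\emph{Lower bound.} For every $n\geq 4$ the group $A_n$ acts faithfully by rotations on the trace-zero hyperplane of the natural permutation representation, giving an embedding $A_n\hookrightarrow\mathrm{SO}(n-1,\R)$. Pulling back the connected double cover $\mathrm{Spin}(n-1,\R)\to\mathrm{SO}(n-1,\R)$ produces a central extension $1\to C_2\to\widetilde{A_n}\to A_n\to 1$. One checks it is non-split: the element $(1\,2)(3\,4)$ acts as a rotation by $\pi$ in a single $2$-plane, whose preimages in the spin group have order $4$, so the extension does not even split over $\langle(1\,2)(3\,4)\rangle\cong C_2$. Since $A_n$ is perfect, a non-split central extension by $C_2$ is automatically a stem extension, so $C_2$ is a quotient of the Schur multiplier for all $n\geq 4$ — in particular for $n=5$ and $n\geq 8$. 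For $n=6,7$ one additionally writes down a non-split central extension by $C_3$: the Valentiner group $3\cdot A_6\leq\SL_3(\C)$ realizing the exceptional $3$-dimensional projective representation of $A_6$, together with its analogue $3\cdot A_7$. Combined with the double cover (and $\gcd(2,3)=1$), this shows $C_6$ divides the multiplier for $n=6,7$.

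\emph{Upper bound.} One first pins down the small cases $n\in\{5,6,7,8\}$, where exceptional isomorphisms help: $A_5\cong\PSL_2(5)$ and $A_8\cong\PSL_4(2)$ have Schur multiplier $C_2$, $A_6\cong\PSL_2(9)$ has Schur multiplier $C_6$, and $A_7$ is handled by computing $H_2$ from a presentation (e.g. Carmichael's) via Hopf's formula $H_2(G)\cong(R\cap[F,F])/[F,R]$, which gives $C_6$. Then one argues by induction for $n\geq 9$, the engine being the identity $\mathrm{cor}\circ\mathrm{res}=[A_n:H]$ on $H_2(A_n)$ for a subgroup $H\leq A_n$: taking $H$ to be the stabilizer of a suitable subset of $\{1,\dots,n\}$, whose index $\binom{n}{k}$ (for $k$ an appropriate power of the prime $p$ under consideration) is coprime to $p$, one finds that the $p$-primary part of $H_2(A_n)$ embeds into $H_2$ of a product of strictly smaller symmetric or alternating groups, whose homology is under control by induction (the Künneth cross term $H_1\otimes H_1$ vanishing as soon as both factors involve an $A_j$ with $j\geq 5$). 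Feeding in the base cases, this at once kills the $p$-part for every $p\geq 5$, shows the $3$-part vanishes for $n\geq 8$, and — together with the double cover above — shows the $2$-part is exactly $C_2$. Hence the multiplier is $C_2$ for $n=5$ and $n\geq 8$ and $C_6$ for $n=6,7$.

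\emph{Main obstacle.} The genuine difficulty is this inductive upper bound, and within it the two small primes. The restriction–corestriction bookkeeping disposes of all $p\geq 5$ cheaply, but $p=2$ requires the extra input that the double cover already accounts for the whole $2$-part — so one must control $H_2$ of the odd-index subgroups $(S_k\times S_{n-k})\cap A_n$ that appear, which drags in the parallel statement $M(S_n)=C_2$ — and $p=3$ requires understanding precisely why the factor of $3$ present for $n\leq 7$ does not propagate to $n\geq 8$, equivalently why the Valentiner-type covers have no analogue for large $n$. Making the $p=2$ analysis fully rigorous is the most technical point; as a shortcut, for $n\geq 8$ one may instead invoke homological stability for the alternating groups, which yields $H_2(A_n)\cong H_2(A_8)\cong C_2$ directly.
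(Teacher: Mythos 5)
Your proposal is correct in outline, but note that the paper does not prove this lemma at all: its ``proof'' is the single line ``See 2.7 of [Wilson, \emph{The finite simple groups}]'', i.e.\ a citation of Schur's classical theorem. What you have written is a legitimate sketch of how that theorem is actually proved, and the two main ingredients you identify are the right ones: the spin double cover (with the order-$4$ lift of $(1\,2)(3\,4)$ certifying non-splitness) plus the Valentiner-type triple covers for the lower bound, and a restriction--corestriction/transfer induction for the upper bound. Two caveats are worth flagging. First, the step ``non-split central extension by $C_2$ is automatically stem'' uses perfectness, which holds only for $n\ge 5$, not for all $n\ge 4$ as your phrasing suggests; this is harmless since you only invoke it for $n=5$ and $n\ge 8$. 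Second, your base cases lean on $M(\PSL_2(9))\cong C_6$ and $M(\PSL_4(2))\cong C_2$, which are precisely the \emph{exceptional} entries in the multiplier table for $\PSL_n(\F_q)$ (Lemma \ref{lem:b} of this paper) and are themselves usually established by directly computing $M(A_6)$ and $M(A_8)$ --- so quoting them hides essentially the same computation you would otherwise do via Hopf's formula for $A_7$. As a self-contained argument the sketch would need those computations (or the homological stability input you mention) filled in, but as a reconstruction of the standard proof it is sound and, unlike the paper, actually exhibits the mechanism behind the statement.
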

\begin{proof}
See 2.7 of \cite{Wilson-2009}
\end{proof}
\begin{lem} \label {lem:b}
The Schur multiplier of $\PSL_n(\F_{p^d})$ is a cyclic group of order $\mathrm{gcd}(n,p^d-1)$ except for $\PSL_2(\F_{4})$ (order 2), $\PSL_2(\F_{9})$ (order 6), $\PSL_3(\F_{2})$ (order 2), $\PSL_3(\F_{4})$ (order 48, product of cyclic groups of order 3, 4, 4) and $\PSL_4(\F_{2})$ (order 2).
\end{lem}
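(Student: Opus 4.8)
## Proof proposal for Lemma \ref{lem:b}

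The plan is to treat this as a citation-based lemma in the same spirit as Lemma \ref{lem:A}: the Schur multipliers of the finite simple groups of type $\PSL_n(\F_{p^d})$ are completely classified, and we only need to quote the classification accurately, with particular care for the finitely many exceptional cases. First I would invoke the general theorem on Schur multipliers of finite simple groups of Lie type (as found in, e.g., \cite{Wilson-2009}, or equivalently in the ATLAS, or in Gorenstein--Lyons--Solomon), which states that for $G = \PSL_n(q)$ with $q = p^d$, the Schur multiplier (equivalently, the multiplier of the universal version $\SL_n(q)$ modulo its center, together with the ``diagonal'' contribution) is cyclic of order $\gcd(n, q-1)$, coming from the center of $\SL_n(q)$, \emph{except} for a short explicit list of small cases where extra exceptional contributions occur. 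Thus the structure of the argument is: (1) state the generic formula; (2) enumerate and verify the exceptions.

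For step (1), the key point is that $\SL_n(q)$ is the universal central extension of $\PSL_n(q)$ in all but the exceptional cases, and $Z(\SL_n(q)) = \mu_n(\F_q)$ has order $\gcd(n,q-1)$; hence $H_2(\PSL_n(q),\Z) \cong \Z/\gcd(n,q-1)\Z$. This is the content one finds stated in \cite{Wilson-2009}. For step (2), I would simply list the exceptions exactly as in the statement — $\PSL_2(\F_4)\cong A_5$ (multiplier $C_2$, consistent already with Lemma \ref{lem:A} since $\gcd(2,3)=1$ would wrongly predict trivial), $\PSL_2(\F_9)\cong A_6$ (multiplier $C_6$, again matching Lemma \ref{lem:A}), $\PSL_3(\F_2)\cong\PSL_2(\F_7)$ (multiplier $C_2$ rather than the generic $\gcd(3,1)=1$), $\PSL_4(\F_2)\cong A_8$ (multiplier $C_2$ versus generic $\gcd(4,1)=1$), and $\PSL_3(\F_4)$ (multiplier of order $48$, namely $C_3\times C_4\times C_4$, versus the generic $\gcd(3,3)=3$) — and note that these coincidences are well documented; for instance the first two are immediate from Lemma \ref{lem:A} via the exceptional isomorphisms, and all five appear in the standard tables. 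So the proof is essentially:
\begin{quote}
See, for example, Theorem 5.1.4 of \cite{Wilson-2009}, together with the table of exceptional multipliers there; the cases $\PSL_2(\F_4)\cong A_5$ and $\PSL_2(\F_9)\cong A_6$ are also covered by Lemma \ref{lem:A}.
\end{quote}

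The only real obstacle is bookkeeping rather than mathematics: one must make sure the list of exceptions in the lemma matches the source exactly (in particular that no case such as $\PSL_2(\F_4)$ or $\PSL_3(\F_2)$ is omitted, since the generic formula $\gcd(n,q-1)$ gives the ``wrong'' answer there and these groups do in fact arise as composition factors relevant to the unramified-extension analysis later in the paper), and that the precise group structure $C_3\times C_4\times C_4$ is recorded for $\PSL_3(\F_4)$. Since this is a known classification result, I would not reprove it; the proof in the paper should be a one-line pointer to \cite{Wilson-2009} (or the ATLAS), exactly parallel to the proof given for Lemma \ref{lem:A}.
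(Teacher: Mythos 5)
Your proposal is correct and matches the paper's approach: the paper's entire proof is a one-line citation to Section~3.3 of \cite{Wilson-2009}, exactly the citation-based treatment you describe (your extra discussion of the generic $\gcd(n,q-1)$ formula and verification of the five exceptional cases is accurate but goes beyond what the paper records). The only cosmetic difference is the precise location cited within \cite{Wilson-2009}.
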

\begin{proof}
See 3.3 of \cite{Wilson-2009}.
\end{proof}
\subsection{Group extensions of groups with trivial centers}
Let $H$ and $F$ be groups, with $G$ a group extension of $H$ by $F$:
\begin{displaymath}
1 \to H \to G \to F \to 1
\end{displaymath}
Then, it is well known that $F$ acts on $H$ by conjugation, and this action induces a group homomorphism $\psi _G : F \to \mathrm{Out}\textrm{ }H$, which depends only on $G$.
\begin{lem}((7.11) of \cite{Suzuki-1982})
\label{lem:suzuki}
Suppose that $H$ has trivial center $(Z(H)=\{1\})$. Then, the structure of $G$ is uniquely determined by the homomorphism $\psi _G$. For any group homomorphism $\psi$ from $F$ to $\mathrm{Out}\textrm{ }H$, there exists an extension $G$ of $H$ by $F$ such that $\psi _G =\psi$. Moreover, the isomorphism class of $G$ is uniquely determined by $\psi$. (In particular, the class of $F \times H$ is determined by $\psi$ with $\psi(F)=1$.) All of the extensions are realized as a subgroup $U$ of the direct product $F \times \Aut\textrm{ }H$ satisfying the two conditions $U \cap \Aut\textrm{ }H = \mathrm{Inn}\textrm{ }H$ and $\pi(U) = F$, where $\pi$ is the projection from $F \times \Aut\textrm{ }H$ to $F$.
\end{lem}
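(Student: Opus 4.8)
The plan is to realise every extension of $H$ by $F$ explicitly as a subgroup of $F \times \Aut H$ and then to show that this subgroup depends only on the induced map to $\mathrm{Out}\,H$. First I would attach to an extension $1 \to H \to G \xrightarrow{\rho} F \to 1$ its two canonical homomorphisms: the quotient map $\rho$ and the conjugation map $\theta\colon G \to \Aut H$, $\theta(g) = (h \mapsto ghg^{-1})$, and form $\Phi := (\rho,\theta)\colon G \to F \times \Aut H$. The one and only place the hypothesis $Z(H)=\{1\}$ is used is that $\Phi$ is injective: an element of $\ker\Phi$ lies in $H$ (since $\rho(g)=1$) and centralises $H$ (since $\theta(g)=1$), so $\ker\Phi = H \cap C_G(H) = Z(H) = \{1\}$. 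The same hypothesis makes $\theta|_H$ an isomorphism of $H$ onto $\mathrm{Inn}\,H$, so $\Phi$ carries $H$ onto $\{1\}\times\mathrm{Inn}\,H$. Writing $U := \Phi(G)$ and letting $\pi$ be the projection to $F$, we get $U \cap \Aut H = \mathrm{Inn}\,H$ and $\pi(U) = \rho(G) = F$; conversely any subgroup $U \le F \times \Aut H$ with these two properties is, via $\pi$, an extension of $\mathrm{Inn}\,H \cong H$ by $F$. This already establishes the last sentence of the lemma.

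Next I would parametrise such subgroups by homomorphisms into $\mathrm{Out}\,H$. Let $q\colon \Aut H \to \mathrm{Out}\,H = \Aut H/\mathrm{Inn}\,H$ be the projection, and for a homomorphism $\psi\colon F \to \mathrm{Out}\,H$ set
\[
U_\psi := \{(f,\alpha)\in F\times\Aut H \ : \ q(\alpha)=\psi(f)\},
\]
i.e.\ the preimage of the graph $\Gamma_\psi=\{(f,\psi(f)):f\in F\}$ under $\mathrm{id}_F\times q$. Being the preimage of a subgroup under a homomorphism, $U_\psi$ is a subgroup, and one checks immediately that $U_\psi\cap\Aut H = \{1\}\times q^{-1}(1) = \mathrm{Inn}\,H$ and $\pi(U_\psi)=F$; thus $U_\psi$ is an extension realising the prescribed outer action, which proves the existence statement. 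For the converse and the uniqueness, starting from an arbitrary extension $G$ I would first check that $\psi_G\colon F \to \mathrm{Out}\,H$ is well defined (two lifts of $f\in F$ to $G$ differ by an element of $H$, whose image under $\theta$ lies in $\mathrm{Inn}\,H$, hence dies in $\mathrm{Out}\,H$) and a homomorphism, and then that $U = \Phi(G)$ coincides with $U_{\psi_G}$: for each $f\in F$, picking $g$ with $\rho(g)=f$, the fibre of $U$ over $f$ is $\Phi(gH) = \{f\}\times\theta(g)\,\mathrm{Inn}\,H = \{f\}\times q^{-1}(\psi_G(f))$, which is exactly the fibre of $U_{\psi_G}$. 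Hence $G \cong U_{\psi_G}$ as extensions, in particular as groups, so $\psi_G$ determines $G$; taking $\psi=1$ gives $U_1 = F\times\mathrm{Inn}\,H \cong F\times H$, the parenthetical claim.

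I expect the only genuinely delicate point to be the bookkeeping in the final identification $\Phi(G)=U_{\psi_G}$ — namely that $\Phi(G)$ is recovered as the \emph{full} preimage of $\Gamma_{\psi_G}$, not merely a subgroup of it (equivalently, that each fibre over $F$ is a full $\mathrm{Inn}\,H$-coset). This again rests on $\theta(H)$ being all of $\mathrm{Inn}\,H$, a consequence of $Z(H)=\{1\}$. Everything else is formal manipulation of the pair $(\rho,\theta)$; no cohomological obstruction intervenes precisely because $Z(H)$ — the coefficient module in which the relevant $H^2$ and $H^3$ would live — is trivial, which is the conceptual reason the classification reduces to $\Hom(F,\mathrm{Out}\,H)$.
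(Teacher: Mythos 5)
Your argument is correct and is essentially the proof the lemma is pointing at: the paper gives no proof of its own (it simply cites (7.11) of Suzuki), and the embedding $G \hookrightarrow F \times \Aut\textrm{ }H$ via $(\rho,\theta)$, injective precisely because $H \cap C_G(H) = Z(H) = \{1\}$, together with the identification of the image as the preimage of the graph of $\psi_G$ under $\mathrm{id}_F \times q$, is exactly the construction the statement itself describes. No gaps.
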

%
%
\subsection{Prerequisites on $\GL_n(\F_q)$}
\subsubsection{General prerequisites}
The following lemma is well-known.
\begin{lem}\label{lem:schur}
Let $n\ge 2$, $q$ be a prime power, and let $U\le \GL_n(\F_q)$ act irreducibly on $(\F_q)^n$. Then the centralizer of $U$ in $\GL_n(\F_q)$ is cyclic.
\end{lem}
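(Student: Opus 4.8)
The plan is to invoke a version of Schur's lemma over the field $\F_q$, with a small twist to handle the fact that $\F_q$ need not be algebraically closed. First I would consider the subalgebra $E := \F_q[U] \subseteq M_n(\F_q)$ generated by the image of $U$; since $U$ acts irreducibly on $V := (\F_q)^n$, the algebra $E$ acts irreducibly on $V$ as well. The centralizer of $U$ in $M_n(\F_q)$ equals $\mathrm{End}_E(V)$, the ring of $E$-module endomorphisms of $V$. By Schur's lemma in its module-theoretic form, since $V$ is a simple $E$-module, $D := \mathrm{End}_E(V)$ is a division ring; being finite, Wedderburn's little theorem forces $D$ to be a (commutative) finite field, hence a field extension of $\F_q$. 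The centralizer of $U$ in $\GL_n(\F_q)$ is precisely the group of units $D^\times$ of this finite field, and the multiplicative group of any finite field is cyclic. This gives the claim.

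The key steps in order are therefore: (1) identify the centralizer of $U$ in $\GL_n(\F_q)$ with $\mathrm{End}_E(V)^\times$ where $E=\F_q[U]$; (2) observe $V$ is simple as an $E$-module because $U$ acts irreducibly; (3) apply Schur's lemma to conclude $\mathrm{End}_E(V)$ is a division ring; (4) apply Wedderburn's little theorem to upgrade "finite division ring" to "finite field"; (5) use that $\F_{q^k}^\times$ is cyclic. None of these steps is genuinely hard — the lemma is flagged as well-known — but the one point that deserves care is step (3)/(4): over a non-algebraically-closed field one cannot simply say the endomorphism ring is $\F_q$ itself (it can be a proper extension $\F_{q^k}$, e.g.\ a non-split torus acting on $\F_q^2$), so one must run the argument through division rings and Wedderburn rather than through the na\"ive "scalars only" form of Schur's lemma.

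The main obstacle, such as it is, is purely expository: making sure the finite-field subtlety is acknowledged so that the cyclicity conclusion — rather than the stronger but false "centralizer is $\F_q^\times$" — is what gets recorded. Since the paper only needs cyclicity (presumably to bound the order of a central extension or a scalar subgroup later on), the division-ring-plus-Wedderburn route is exactly the right level of generality, and the proof can be stated in a few lines citing Schur's lemma and Wedderburn's theorem.
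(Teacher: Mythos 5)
Your proof is correct and follows essentially the same route as the paper, which simply cites Schur's lemma; you have filled in the standard details (centralizer $=\mathrm{End}_E(V)^\times$ for $E=\F_q[U]$, Schur gives a division ring, Wedderburn upgrades it to a finite field, whose unit group is cyclic). The explicit mention of Wedderburn's little theorem is exactly the point implicitly absorbed into the paper's ``follows immediately,'' so nothing is missing.
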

\begin{proof}
This follows immediately from Schur's lemma.
\end{proof}

\begin{lem}\label{lem:irred_cyclic}
Let $n\ge 2$, $q$ be a prime power and let $U\le \GL_n(\F_q)$ be cyclic, of order coprime to $q$. Assume that $U$ acts irreducibly on $(\F_q)^n$. Then the centralizer of $U$ in $\GL_n(\F_q)$ is cyclic of order $q^n-1$.
\end{lem}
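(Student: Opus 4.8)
The plan is to route the statement through the $\F_q$-subalgebra of matrices generated by $U$. Write $V:=(\F_q)^n$, viewed as the natural module for $M_n(\F_q)=\mathrm{End}_{\F_q}(V)$, and let $A:=\F_q[U]\subseteq M_n(\F_q)$ be the $\F_q$-subalgebra generated by $U$. Since $U$ is abelian, $A$ is commutative. I would show that $A$ is a field isomorphic to $\F_{q^n}$, identify $C_{\GL_n(\F_q)}(U)$ with $A^{\times}$, and then read off the conclusion from the structure of finite fields; this also sharpens Lemma \ref{lem:schur}.

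First I would observe that an $\F_q$-subspace of $V$ is $U$-invariant if and only if it is an $A$-submodule, so the hypothesis that $U$ acts irreducibly on $V$ says precisely that $V$ is a simple $A$-module; moreover $V$ is faithful as an $A$-module because $A\subseteq M_n(\F_q)$. By Schur's lemma (the same input as in Lemma \ref{lem:schur}), $D:=\mathrm{End}_A(V)$ is a division ring, and being finite it is a field. The scalar action of $A$ on $V$ embeds $A$ into $D$, injectively by faithfulness, so $A$ is a finite integral domain and hence itself a field; write $A\cong\F_{q^d}$. Next I would pin down $d$: regarding $V$ as a vector space over the field $A=\F_{q^d}$, its $A$-submodules are exactly its $\F_{q^d}$-subspaces, so simplicity forces $\dim_{\F_{q^d}}V=1$; comparing cardinalities gives $q^n=|V|=q^d$, whence $d=n$ and $A\cong\F_{q^n}$. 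Since $V$ is one-dimensional over $A$, it follows that $D=\mathrm{End}_A(V)=A\cong\F_{q^n}$ as well.

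Finally I would translate back to $\GL_n(\F_q)$. A matrix in $M_n(\F_q)$ centralizes $U$ if and only if it centralizes all of $A=\F_q[U]$, i.e. if and only if it lies in $\mathrm{End}_A(V)=D$; and if such a matrix is invertible in $M_n(\F_q)$, then its inverse again centralizes $U$. Hence $C_{\GL_n(\F_q)}(U)=D^{\times}\cong\F_{q^n}^{\times}$, a cyclic group of order $q^n-1$, as claimed. I do not expect a genuine obstacle: the only point requiring a little care is the dictionary between $U$-invariant subspaces and $\F_q[U]$-submodules, which turns ``$U$ acts irreducibly'' into ``$V$ is a simple $\F_q[U]$-module''; after that, everything reduces to Schur's lemma and the classification of finite fields. (One may note in passing that the coprimality of $|U|$ to $q$ is not actually used in this argument; it is imposed only because that is the setting in which the lemma will later be applied.)
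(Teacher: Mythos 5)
Your argument is correct and complete. The paper itself does not prove this lemma at all: it simply quotes Huppert's Hilfssatz II.3.11, which states that the centralizer of an irreducible cyclic $U\le \GL_n(\F_q)$ is isomorphic to $\GL_1(\F_{q^n})$. What you have written is, in effect, a self-contained proof of that cited result, and it is the standard one: identify the centralizer of $U$ with $\mathrm{End}_A(V)$ for $A=\F_q[U]$, use commutativity of $A$ plus faithfulness and simplicity of $V$ to force $A\cong\F_{q^n}$ with $\dim_A V=1$, and conclude $C_{\GL_n(\F_q)}(U)=A^{\times}$. All the individual steps check out, including the dictionary between $U$-invariant subspaces and $A$-submodules and the observation that a nonzero element of the field $A$ is automatically invertible as a matrix. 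Your parenthetical remark is also accurate, and can be pushed slightly further: the coprimality of $|U|$ to $q$ is not merely unused but is actually implied by irreducibility, since if $p=\mathrm{char}\,\F_q$ divided $|U|$ then the $p$-part of a generator would be unipotent and its nonzero fixed space would be a proper $U$-invariant subspace. The trade-off between the two approaches is the usual one: the citation is shorter, while your version makes the paper independent of Huppert for this step and exposes exactly which hypotheses carry the weight.
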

\begin{proof}
This follows from \cite[Hilfssatz II.3.11]{Huppert}. Namely, setting $G:=C_{\GL_n(\F_q)}$, the centralizer of $U$ in $GL_n(\F_q)$, that theorem states that $G$ is isomorphic to $\GL_1(\F_{q^n})$, and thus in particular cyclic of order $q^n-1$.
\end{proof}

An important special case of the previous lemma is the following:
\begin{lem}\label{lem:singer}
Let $n\ge 2$, $q$ be a prime power and let $p$ be a primitive prime divisor of $q^n-1$, that is $p$ divides $q^n-1$, but does not divide any of the numbers $q^k-1$ with $1\le k<n$. Then the following hold:
\begin{itemize}
\item[i)] There is a unique non-trivial linear action of $C_p$ on $(\F_q)^n$, and this action is irreducible.
\item[ii)] The centralizer of a subgroup of order $p$ in $\GL_n(\F_q)$ is cyclic, of order $q^n-1$.
\end{itemize}
\end{lem}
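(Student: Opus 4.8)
The plan is to reduce both claims to the module theory of $\F_q[C_p]$, the decisive input being that the hypothesis on $p$ forces a very rigid factorization of the cyclotomic polynomial $\Phi_p$ over $\F_q$. First I would record that saying ``$p$ is a primitive prime divisor of $q^n-1$'' is exactly saying that the multiplicative order of $q$ modulo $p$ equals $n$. In particular $p\ne\mathrm{char}(\F_q)$, a primitive $p$-th root of unity $\zeta$ generates $\F_{q^n}$ over $\F_q$ and lies in no proper subextension, and hence $\Phi_p(x)=(x^p-1)/(x-1)$ factors over $\F_q$ into $(p-1)/n$ distinct monic irreducible polynomials, each of degree $n$. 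Also, since $p\mid q^n-1=\abs{\F_{q^n}^\times}$, the group $\GL_n(\F_q)$ does contain an element of order $p$ (e.g. inside the Singer subgroup given by the multiplication action of $\F_{q^n}^\times$ on $\F_{q^n}\cong(\F_q)^n$), so a non-trivial action exists.

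Next, for the irreducibility part of (i): a non-trivial linear action of $C_p$ on $V=(\F_q)^n$ is the same as a matrix $M\in\GL_n(\F_q)$ with $M\ne I$ and $M^p=I$, and it makes $V$ a module over $R:=\F_q[x]/(x^p-1)$. Since $x^p-1$ is separable, the factorization above gives $R\cong\F_q\times\prod_{i=1}^{(p-1)/n}\F_q[x]/(f_i)$, a product of fields with each $\F_q[x]/(f_i)\cong\F_{q^n}$. Hence $V$ decomposes as a trivial submodule of some $\F_q$-dimension $d_0$ together with a direct sum of copies of $n$-dimensional irreducible modules; comparing $\F_q$-dimensions forces $d_0\equiv 0\pmod n$, so $d_0\in\{0,n\}$, and $d_0=n$ would mean $M=I$. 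Therefore $d_0=0$, $V$ is irreducible, and its characteristic polynomial is one of the irreducible $f_i$.

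It remains to make precise the uniqueness: I would show that any two order-$p$ subgroups $\langle M\rangle,\langle M'\rangle\le\GL_n(\F_q)$ are conjugate. By the previous paragraph $M$ and $M'$ have irreducible characteristic polynomials $f_i$ and $f_j$; the eigenvalues of $M$ form one Frobenius orbit $\{\zeta,\zeta^q,\dots,\zeta^{q^{n-1}}\}$, and picking $b$ with $\zeta^b$ a root of $f_j$ (so $\gcd(b,p)=1$), the matrix $M^b$ has characteristic polynomial $f_j$. Then $M^b$ and $M'$ share the same irreducible characteristic polynomial, hence each is conjugate to its companion matrix and so they are conjugate to one another; since $\langle M^b\rangle=\langle M\rangle$, this conjugates $\langle M\rangle$ onto $\langle M'\rangle$. (Equivalently: there are $(p-1)/n$ pairwise inequivalent faithful $\F_q$-representations of $C_p$, but $\Aut(C_p)$ permutes them transitively and all of them afford one and the same subgroup up to conjugacy.) This settles (i). For (ii), let $P\le\GL_n(\F_q)$ have order $p$; then $P$ is cyclic of order coprime to $q$ and acts irreducibly on $(\F_q)^n$ by (i), so since $n\ge2$ Lemma \ref{lem:irred_cyclic} applies directly and gives that $C_{\GL_n(\F_q)}(P)$ is cyclic of order $q^n-1$.

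The only genuinely delicate point is the uniqueness assertion in (i): one must be careful about the meaning of ``unique'', since for $n<p-1$ there really are several non-isomorphic faithful irreducible $\F_q$-representations of $C_p$, and the statement becomes correct only after passing to conjugacy classes of subgroups (equivalently, after identifying representations that differ by an automorphism of $C_p$). Everything else is routine: the dimension count over the split algebra $R$, and the Schur-type computation of the centralizer, which is already packaged for us in Lemma \ref{lem:irred_cyclic} (via Huppert, Hilfssatz II.3.11).
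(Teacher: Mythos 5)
Your proof is correct. The irreducibility step and the centralizer step coincide with the paper's: the paper invokes Maschke's theorem and then Lemma \ref{lem:irred_cyclic}, which is exactly the dimension count over the split semisimple algebra $\F_q[x]/(x^p-1)$ that you spell out (your observation that every nontrivial irreducible $\F_q[C_p]$-module has dimension $n=\mathrm{ord}_p(q)$ is the content hidden in the paper's ``follows immediately from Maschke''). Where you genuinely diverge is the uniqueness assertion in (i). The paper argues via Sylow theory: since $\abs{\GL_n(\F_q)}=q^{n(n-1)/2}\prod_{k=1}^{n}(q^k-1)$ and $p$ is a primitive prime divisor of $q^n-1$, the $p$-part of the group order divides $q^n-1$, so a $p$-Sylow subgroup already lies inside the cyclic Singer subgroup $\GL_1(\F_{q^n})\le\GL_n(\F_q)$ and is cyclic; every order-$p$ subgroup is then the unique order-$p$ subgroup of some $p$-Sylow, and conjugacy of Sylow subgroups finishes the argument. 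You instead use rational canonical form: both generators have irreducible characteristic polynomials dividing $\Phi_p$, and after replacing one generator by a suitable power (i.e., twisting by an automorphism of $C_p$) the two share the same companion matrix. Both arguments are complete. The Sylow route is shorter and needs no count of the irreducible factors of $\Phi_p$; your route is more explicit and has the side benefit of making precise the sense in which the action is ``unique'' --- there are $(p-1)/n$ pairwise inequivalent faithful irreducible representations, permuted transitively by $\Aut(C_p)$, all affording a single conjugacy class of subgroups. That reading of ``unique'' (up to conjugacy of the image, equivalently up to $\Aut(C_p)$) is indeed the one the paper intends and uses later in Lemmata \ref{lem:10}, \ref{lem:12} and \ref{lem:13}.
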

\begin{proof}
Let $U<\GL_n(\F_q)$ be any subgroup isomorphic to $C_p$. From Maschke's theorem, it follows immediately that $U$ acts irreducibly on $(\F_n)^q$. From Lemma \ref{lem:irred_cyclic}, the centralizer of $U$ in $GL_n(\F_q)$ is then cyclic, of order $q^n-1$. Finally, every such $U$ is the unique subgroup of order $p$ of some $p$-Sylow subgroup of $\GL_n(\F_q)$ (note that, by assumption, the $p$-Sylow subgroups are of order dividing $q^n-1$, and then in fact cyclic, since $\GL_1(\F_{q^n})\le \GL_n(\F_q)$ is cyclic). Therefore all such subgroups $U$ are conjugate in $\GL_n(\F_q)$, proving the uniqueness in i).
\end{proof}
In the following sections, we collect some results about more specific linear groups.
\subsubsection{Structure of $\GL_2(\F_p)$}
\begin{lem} \label {lem:14}
$\GL_2(\F_p)$ does not contain any non-abelian simple subgroups for any prime $p$.
\end{lem}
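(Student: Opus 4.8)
The plan is to reduce everything to the special linear group and then exploit the fact that $\SL_2(\F_p)$ has a unique involution. First I would reduce to $\SL_2$: if $H\le\GL_2(\F_p)$ is non-abelian simple, then $H$ is perfect, so the restriction to $H$ of the determinant $\det\colon\GL_2(\F_p)\to\F_p^\times$ is a homomorphism into an abelian group and hence factors through $H/[H,H]=\{1\}$; thus $H\le\SL_2(\F_p)$. This settles $p=2$ at once, since $|\GL_2(\F_2)|=6$ is smaller than $60$, the order of the smallest non-abelian simple group.

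For $p$ odd, I would next record the elementary fact that $-I$ is the only involution of $\SL_2(\F_p)$: if $A\in\SL_2(\F_p)$ satisfies $A^2=I$ and $A\ne\pm I$, then the minimal polynomial of $A$ is $(x-1)(x+1)$, so $A$ is conjugate to $\sm 1&0\\0&-1\esm$ and $\det A=-1$, a contradiction. Now suppose $H\le\SL_2(\F_p)$ is non-abelian simple. By the Feit--Thompson odd order theorem $|H|$ is even, so $H$ contains an involution, which by the previous observation must equal $-I$. But $-I$ is central in $\SL_2(\F_p)$, hence lies in $Z(H)$, contradicting $Z(H)=\{1\}$. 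This would complete the proof.

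The main (and essentially the only) obstacle is the appeal to a classical "big theorem"; the linear-algebra content is completely routine. If one prefers to avoid the Feit--Thompson theorem, one can argue instead as follows: since $Z(H)=\{1\}$, the group $H$ meets the center $\{\pm I\}$ of $\SL_2(\F_p)$ trivially, so $H$ embeds into $\PSL_2(\F_p)$; by Dickson's classification of the subgroups of $\PSL_2(\F_p)$ its image is $A_5$ or $\PSL_2(\F_p)$ itself, and in either case the full preimage in $\SL_2(\F_p)$ of that image would be a subgroup isomorphic to $H\times C_2$, which is impossible inside $\SL_2(\F_p)$ by the unique-involution property just established. Either route yields the lemma.
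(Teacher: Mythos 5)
Your proof is correct, but it follows a genuinely different route from the paper's. The paper argues via subgroup structure: a non-abelian simple group $S$ must contain a non-cyclic abelian subgroup, hence some $C_r\times C_r$ with $r\ne p$, and Schur's lemma (applied to the irreducible or completely reducible action on $(\F_p)^2$) forces any such $C_r\times C_r$ to meet the center of $\GL_2(\F_p)$ nontrivially, contradicting $Z(S)=\{1\}$. You instead reduce to $\SL_2(\F_p)$ via perfectness of $H$ and the determinant, observe that $-I$ is the unique involution of $\SL_2(\F_p)$ for odd $p$ and is central, and then invoke Feit--Thompson to guarantee that $H$ contains an involution. Both arguments lean on one nontrivial classical input: the paper on the fact that non-abelian simple groups contain non-cyclic abelian subgroups (which it simply cites), and yours on the odd-order theorem (or, in your alternative, on Dickson's classification of subgroups of $\PSL_2(\F_p)$). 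The paper's Schur's-lemma/centralizer technique has the advantage of being the same device reused elsewhere in the article (e.g.\ for $\GL_3(\F_5)$ and $\GL_4(\F_3)$), and the "non-cyclic abelian subgroup" fact is arguably lighter machinery than Feit--Thompson; your unique-involution argument is more self-contained at the linear-algebra level and pins down exactly why $\SL_2$ is too small, namely that its $2$-rank is one. Your alternative route via $\PSL_2(\F_p)$ is also sound: since $H\cap Z(\SL_2(\F_p))\le Z(H)=\{1\}$, the preimage of the image of $H$ is $H\times C_2$, and as the Dickson candidates $A_5$ and $\PSL_2(\F_p)$ both have even order, $H\times C_2$ would contain more than one involution. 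All steps check out.
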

\begin{proof}
Let $S$ be non-abelian simple. Then it is known that $S$ contains a non-cyclic abelian subgroup (see e.g.\ \cite[Corollary 6.6]{KLN}), and therefore even some subgroup $C_r\times C_r$ for some prime $r$.
On the other hand, as a direct consequence of Schur's lemma, any subgroup $C_r\times C_r$ of $\GL_2(\F_p)$ must intersect the center of $\GL_2(\F_p)$ non-trivially.\footnote{To apply Schur's lemma here, we have used that $p\ne r$, which is obvious, since $p^2$ does not divide $|GL_2(\F_p)|$.} Since $S$ has trivial center, it follows that $S$ cannot be contained in $\GL_2(\F_p)$.
\end{proof}
\subsubsection{Structure of $\GL_4(\F_2)$}
This article uses the structure of $\GL_4(\F_2)$. Thus, we recall several structural properties of this group.
\begin{prop} \label {prop:a}
$A_8$ is isomorphic to $\PSL_4(\F_2) = \GL_4(\F_2)$.
\end{prop}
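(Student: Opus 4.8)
The plan has two parts: the easy identification $\GL_4(\F_2)=\PSL_4(\F_2)$, and the classical exceptional isomorphism $\GL_4(\F_2)\cong A_8$. I would dispose of the first at once: over $\F_2$ the only nonzero scalar is $1$, so every invertible matrix has determinant $1$ (hence $\GL_4(\F_2)=\SL_4(\F_2)$) and the only scalar matrix is the identity (hence the centre of $\GL_4(\F_2)$ is trivial and $\SL_4(\F_2)=\PSL_4(\F_2)$).

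For $\GL_4(\F_2)\cong A_8$ I would use the exterior square. Put $V=\F_2^4$ and $W=\Lambda^2 V\cong\F_2^6$, and equip $W$ with the quadratic form $Q$ whose zero locus consists of $0$ together with the decomposable bivectors; explicitly, in the basis $\{e_i\wedge e_j\}_{i<j}$ one has $Q\big(\sum_{i<j}a_{ij}\,e_i\wedge e_j\big)=a_{12}a_{34}+a_{13}a_{24}+a_{14}a_{23}$, the reduction mod $2$ of the Plücker relation. Its polar form is the perfect pairing $W\times W\to\Lambda^4 V\cong\F_2$, $(x,y)\mapsto x\wedge y$, which is non-degenerate; moreover $(e_1\wedge e_2,\,e_3\wedge e_4)$, $(e_1\wedge e_3,\,e_2\wedge e_4)$ and $(e_1\wedge e_4,\,e_2\wedge e_3)$ are three mutually orthogonal hyperbolic pairs on each of which $Q$ vanishes, so $Q$ is of $+$-type. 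Since $\GL(V)$ permutes the decomposable bivectors and acts trivially on $\Lambda^4 V$ (the determinant over $\F_2$ being $1$), it preserves both the pairing and the quadric, hence $Q$ itself; this yields a homomorphism $\varphi\colon \GL_4(\F_2)\to O_6^+(\F_2)$ into the orthogonal group of $Q$.

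Next I would show that $\varphi$ is injective and pin down its image. Injectivity is clear: the $35$ nonzero decomposable bivectors of $W$ are in $\GL(V)$-equivariant bijection with the $35$ two-dimensional subspaces of $V$, and $\GL(V)$ acts faithfully on the latter set; one could also invoke simplicity of $\GL_4(\F_2)=\PSL_4(\F_2)$. An order count gives $|O_6^+(\F_2)|=8!=40320=2\cdot|\GL_4(\F_2)|$, so $\varphi$ realises $\GL_4(\F_2)$ as an index-$2$ subgroup of $O_6^+(\F_2)$. It then remains to identify $O_6^+(\F_2)$ with $S_8$: with $U=\{x\in\F_2^8:\sum_i x_i=0\}$ and $\overline{U}=U/\langle(1,\dots,1)\rangle\cong\F_2^6$, the map $\overline{Q}(\overline{x})=\tfrac12\,\mathrm{wt}(x)\bmod 2$ is a well-defined non-degenerate quadratic form, of $+$-type (it has $36$ singular vectors, coming from the classes of weight $0$, $4$ and $8$), the coordinate action of $S_8$ on $\overline{U}$ preserves it, and this action is faithful (a $3$-cycle already moves the class of $e_1+e_2$); comparing orders once more gives $S_8\cong O_6^+(\F_2)$. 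Finally, the only index-$2$ subgroup of $S_8$ is $A_8$ (an index-$2$ subgroup is normal with abelian quotient, hence contains $[S_8,S_8]=A_8$, hence equals it), so $\GL_4(\F_2)\cong A_8$.

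I expect the main friction to be the orthogonal-group bookkeeping: checking carefully that the two quadratic forms above are non-degenerate and of $+$-type (equivalently, recognising $O_6^+(\F_2)$ on both sides) and verifying the order identities $|\GL_4(\F_2)|=20160$ and $|O_6^+(\F_2)|=|S_8|=40320$. None of this is deep. Alternatively, in a paper of this kind one could simply cite the classical isomorphism $\PSL_4(\F_2)\cong A_8$ (see e.g.\ \cite{Wilson-2009}), or combine the fact that $A_8$ and $\PSL_3(\F_4)$ are the only simple groups of order $20160$ with a computation separating the two — for instance their minimal faithful permutation degrees, which are $8$ for $\GL_4(\F_2)$ (by the embedding above) and $21$ for $\PSL_3(\F_4)$.
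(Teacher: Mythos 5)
Your argument is correct, but it is doing considerably more work than the paper does: Proposition \ref{prop:a} is stated in the paper without any proof at all, as a classical fact to be recalled (the neighbouring lemmas on Schur multipliers cite Wilson's book \cite{Wilson-2009}, and the isomorphism $\PSL_4(\F_2)\cong A_8$ is proved there as well). What you have written out is essentially the standard textbook derivation via the Klein correspondence, and it checks out: the reduction $\GL_4(\F_2)=\SL_4(\F_2)=\PSL_4(\F_2)$ is immediate over $\F_2$; the Pl\"ucker quadratic form on $\Lambda^2 V\cong\F_2^6$ is indeed preserved by $\GL(V)$ (its polar form is the wedge pairing into $\Lambda^4V$, on which $\GL_4(\F_2)$ acts trivially, and a quadratic form over $\F_2$ is determined by its polar form together with its zero set, here the $1+35=36$ singular vectors of a $+$-type form); faithfulness follows from the action on the $35$ planes of $V$; and the identification $O_6^+(\F_2)\cong S_8$ via the even-weight code modulo the all-ones vector is correct, with the order counts $|\GL_4(\F_2)|=20160$ and $|O_6^+(\F_2)|=|S_8|=40320$ forcing the image to be the unique index-$2$ subgroup $A_8$. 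The only comparison to draw is one of economy: for a paper of this kind the appropriate move is your own closing alternative, namely a one-line citation of the exceptional isomorphism, since nothing in the subsequent arguments (Lemmata \ref{lem:9} and \ref{lem:10}) depends on the explicit mechanism of the isomorphism, only on its existence. If you did want a self-contained proof, yours is a clean and complete one.
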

\begin{lem} \label{lem:9}
$A_8$ does not contain a subgroup isomorphic to $A_5 \times C_2$ or $\SL_2(\F_5)$.
\end{lem}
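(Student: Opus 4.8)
The plan is to exploit the observation that both $A_5\times C_2$ and $\SL_2(\F_5)$ are groups of order $120$ admitting a \emph{central} involution: in $A_5\times C_2$ it is the generator of the second factor, and in $\SL_2(\F_5)$ it is $-I$, which is moreover the unique element of order $2$ (over $\F_5$ a matrix $X$ with $X^2=I$ has minimal polynomial dividing $(x-1)(x+1)$, hence is diagonalizable with eigenvalues $\pm 1$, so in $\SL_2$ the only such non-identity matrix is $-I$). Consequently, if one of these groups $G$ were a subgroup of $A_8$, there would be an involution $z\in A_8$ with $G\le C_{A_8}(z)$, and hence $|C_{A_8}(z)|\ge 120$.

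The second step is to list the involutions of $A_8$ together with their centralizers. An involution of $A_8$, regarded as a permutation in $S_8$, has cycle type $2^2 1^4$ or $2^4$. For $z=(12)(34)$ the centralizer in $S_8$ is $(C_2\wr S_2)\times S_4$, of order $192$; it contains the transposition $(12)$, so it is not contained in $A_8$, which gives $|C_{A_8}(z)| = 96$. For $z=(12)(34)(56)(78)$ the centralizer in $S_8$ is $C_2\wr S_4$, of order $384$, and it again contains $(12)\notin A_8$, so $|C_{A_8}(z)| = 192$.

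Combining the two steps finishes the argument: if $z$ has type $2^2 1^4$ then $|C_{A_8}(z)| = 96 < 120$, contradicting $G\le C_{A_8}(z)$; and if $z$ has type $2^4$ then $|C_{A_8}(z)| = 192$, which has no subgroup of order $120$ by Lagrange since $120\nmid 192$, again a contradiction. Hence $A_8$ contains no subgroup isomorphic to $A_5\times C_2$ or to $\SL_2(\F_5)$. There is no real obstacle in this plan; the only points needing attention are verifying that each candidate group genuinely has a central involution (so that the reduction to an involution centralizer is legitimate) and correctly computing the two centralizer orders in $A_8$, both of which are routine — the orders can alternatively be read off from the $S_8$-conjugacy class sizes $210$ and $105$. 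One could instead work inside $\GL_4(\F_2)\cong A_8$ and invoke Lemma \ref{lem:schur}, but the permutation computation above appears to be the most direct.
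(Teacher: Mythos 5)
Your proof is correct, but it takes a genuinely different route from the paper. The paper's argument is a one-line order computation: both $A_5\times C_2$ and $\SL_2(\F_5)$ contain an element of order $10$ (a product of an order-$5$ element with the central involution in each case), whereas $A_8$ has none, since an element of order $10$ in $S_8$ must have cycle type $(5,2,1)$ and is therefore odd. Your argument instead isolates the other structural feature these two groups share --- a central involution --- and rules out an embedding by bounding involution centralizers in $A_8$: the centralizer orders $96$ and $192$ you compute are correct (including the halving step, justified because each $S_8$-centralizer contains the odd permutation $(12)$), and the conclusion follows from $96<120$ and $120\nmid 192$. The paper's proof is shorter and needs no centralizer computation; yours is slightly longer but is the more robust template (local analysis at an involution is the standard way to exclude subgroups with prescribed central elements, and it would still work in situations where no convenient element order is available). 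Both are complete; no gap.
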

\begin{proof}
Both $A_5 \times C_2$ and $\SL_2(\F_5)$ contain an element of order $10$, but there is no element of order $10$ in $A_8$.\end{proof}

\begin{lem} \label{lem:10}
The class of $(12345)$ is the unique conjugacy class of elements of order $5$ in $A_8$. In particular, there is a unique non-trivial linear $C_5$-action on $(\F_2)^4$. This action is irreducible.
\end{lem}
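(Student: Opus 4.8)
The plan is to prove the conjugacy statement directly from the classification of conjugacy classes in symmetric and alternating groups, and then to deduce the representation-theoretic assertion via the isomorphism $A_8\cong\GL_4(\F_2)$ of Proposition \ref{prop:a}.

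First I would note that every element of order $5$ in $S_8$ is a single $5$-cycle: an element of order $5$ has all cycle lengths dividing $5$, hence is a product of disjoint $5$-cycles, and two disjoint $5$-cycles already need $10$ points. All $5$-cycles form one conjugacy class of $S_8$ (conjugacy in $S_n$ being determined by cycle type), and this class is contained in $A_8$. The one point that needs an argument is that it does not split into two $A_8$-classes. For this I would invoke the standard criterion that an $S_n$-class inside $A_n$ splits if and only if its cycle type consists of pairwise distinct odd parts; the cycle type here is $(5,1,1,1)$, whose parts are not pairwise distinct. Equivalently, one checks directly that the centralizer of $(12345)$ in $S_8$ is $\langle(12345)\rangle\times S_{\{6,7,8\}}$, which contains the odd permutation $(67)$, so this centralizer is not contained in $A_8$ and the class does not split. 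Hence the class of $(12345)$ is the unique conjugacy class of elements of order $5$ in $A_8$.

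For the ``in particular'', identify $A_8$ with $\GL_4(\F_2)$ using Proposition \ref{prop:a}. A non-trivial linear action of $C_5$ on $(\F_2)^4$ is a non-trivial homomorphism $C_5\to\GL_4(\F_2)$, whose image has order $5$. Since $|A_8|=2^6\cdot 3^2\cdot 5\cdot 7$, every subgroup of order $5$ is a Sylow $5$-subgroup, so they are all conjugate; this also follows from the first part, as all elements of order $5$ are conjugate. Moreover an automorphism of $C_5$ does not change the isomorphism type of the resulting $4$-dimensional representation: by Maschke's theorem $\F_2[C_5]$ is semisimple, and its unique non-trivial simple module has dimension $4$ (the order of $2$ modulo $5$ being $4$), so a $4$-dimensional faithful $\F_2[C_5]$-module must be exactly that simple module. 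This gives both uniqueness up to equivalence and irreducibility. Alternatively, since $5$ is a primitive prime divisor of $2^4-1=15$, both statements follow at once from Lemma \ref{lem:singer}.

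I expect the only genuinely delicate step to be the non-splitting of the $5$-cycle class in $A_8$; the remaining points are routine bookkeeping with group orders and with the structure of $\F_2[C_5]$.
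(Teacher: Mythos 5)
Your proof is correct. It differs from the paper's in emphasis: the paper disposes of the whole lemma in one line by citing Lemma \ref{lem:singer} with $q=2$, $n=4$ (the route you mention only as an ``alternatively''), whereas your main line of argument is a direct, self-contained verification. In particular, you prove the first assertion by classifying order-$5$ elements of $S_8$ by cycle type and checking that the $5$-cycle class does not split in $A_8$ (via the repeated-parts criterion, or equivalently the odd permutation $(67)$ in the centralizer $\langle(12345)\rangle\times S_{\{6,7,8\}}$, whose order $30$ matches $|S_8|/1344$); the paper instead leaves it to the reader to unwind how uniqueness of the linear $C_5$-action up to equivalence forces $g$ and $g^2$ to be conjugate in $\GL_4(\F_2)\cong A_8$. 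Your treatment of the representation-theoretic part --- Sylow conjugacy of order-$5$ subgroups plus the fact that the unique non-trivial simple $\F_2[C_5]$-module has dimension equal to the order of $2$ modulo $5$, namely $4$ --- is exactly the content of the proof of Lemma \ref{lem:singer}, so nothing is lost. What your version buys is transparency on the conjugacy-class statement, which the paper's citation covers only implicitly; what the paper's version buys is brevity and uniformity with Lemmas \ref{lem:12} and \ref{lem:13}, which reuse the same primitive-prime-divisor lemma.
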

\begin{proof}
This is a special case of Lemma \ref{lem:singer}, with $q=2$ and $n=4$.
\end{proof}

\subsubsection{Structure of $\GL_4(\F_3)$}
We also make use of the structure of $\GL_4(\F_3)$ in this article. So we recall several structural properties of this group. We proved the following lemmas, partially aided by the computer program Magma.
\begin{lem} \label{lem:11}
$\GL_4(\F_3)$ contains a unique conjugacy class of subgroups isomorphic to $A_5 \times C_2$.
\end{lem}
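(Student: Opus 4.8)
The plan is to exhibit at least one subgroup $H \cong A_5 \times C_2$ inside $\GL_4(\F_3)$, and then argue that all such subgroups are conjugate. For existence, the natural source is the $4$-dimensional representation theory of $A_5$ over $\F_3$: the group $A_5 \cong \PSL_2(\F_5)$ has a faithful irreducible $4$-dimensional representation over $\F_3$ (for instance the deleted permutation representation coming from the action on $5$ points, reduced mod $3$), so $A_5 \hookrightarrow \GL_4(\F_3)$, and since $-I \in \GL_4(\F_3)$ is central of order $2$ and not contained in the image of $A_5$ (which is a perfect group lying in $\SL_4(\F_3)$), the subgroup generated by $A_5$ and $-I$ is isomorphic to $A_5 \times C_2$. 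This gives at least one conjugacy class.

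For uniqueness, I would start from an arbitrary $H \le \GL_4(\F_3)$ with $H \cong A_5 \times C_2$, and let $S \cong A_5$ be its unique subgroup isomorphic to $A_5$ (the derived subgroup $H'$). The key is to pin down the $\F_3 S$-module structure of $(\F_3)^4$. Since $A_5$ has no faithful representation of dimension less than $3$ over any field of characteristic $3$ (the nontrivial irreducibles of $A_5$ in characteristic $3$ have dimensions $1, 3, 3, 4$, with the two $3$-dimensional ones not faithful as they factor through nothing — actually they are faithful; I should instead use that any nontrivial $\F_3$-representation of the simple group $A_5$ is faithful), the module $(\F_3)^4$ restricted to $S$ is either irreducible ($4$-dimensional irreducible) or decomposes as $3 \oplus 1$ (a $3$-dimensional irreducible plus a trivial summand); it cannot contain two trivial summands or be a sum of smaller faithful pieces because there is no faithful $\F_3 S$-module of dimension $\le 2$. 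In the $3 \oplus 1$ case, the central involution $z$ of $H$ commutes with $S$, hence acts as an $\F_3 S$-endomorphism; by Schur's lemma (Lemma \ref{lem:schur}) it is a scalar on the $3$-dimensional irreducible summand and a scalar on the trivial summand, so $z \in \{\pm I \text{ on each block}\}$; since $z$ has order $2$ and $z \ne I$, and $z$ must not lie in $S$, one checks the possible sign patterns, and in each case $H = \langle S, z\rangle$ is determined up to conjugacy once the embedding of $S$ is. So the problem reduces to: how many $\GL_4(\F_3)$-classes of subgroups $S \cong A_5$ are there with a given module structure, and how do the possible central involutions match up.

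To finish, I would invoke the representation theory / Brauer character table of $A_5$ in characteristic $3$ to count $\GL_4(\F_3)$-conjugacy classes of embeddings $A_5 \hookrightarrow \GL_4(\F_3)$: two embeddings are conjugate in $\GL_4(\F_3)$ iff the corresponding $\F_3 A_5$-modules are isomorphic (here using that these modules, being either irreducible or of the form $3\oplus 1$ with the $3$-dimensional part irreducible, have endomorphism ring $\F_3$ — or a small extension, which one must check does not create extra classes via a Galois twist, e.g.\ for the two algebraically conjugate $3$-dimensional representations). Matching this against which of those embeddings admit a complementary central $C_2$ lying in $\GL_4(\F_3) \setminus S$, and checking (this is where Magma is genuinely useful, as the statement admits) that all the resulting subgroups $A_5 \times C_2$ fall into a single $\GL_4(\F_3)$-orbit, completes the argument. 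The main obstacle I anticipate is the bookkeeping of the characteristic-$3$ module structure — in particular ruling out that the two Galois-conjugate $3$-dimensional $\F_3 A_5$-modules (or an irreducible $4$-dimensional module and a $3\oplus 1$ module) could give rise to non-conjugate copies of $A_5 \times C_2$; this is exactly the kind of finite check the authors delegate to Magma, so I would verify by computer that $\GL_4(\F_3)$ has a single class of $A_5 \times C_2$ and use the representation-theoretic discussion above as the conceptual explanation.
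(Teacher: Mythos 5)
Your proposal is correct and, in its load-bearing step, ends up in the same place as the paper: the paper's proof is a pure Magma computation (enumerate the four conjugacy classes of order-$120$ subgroups of $\GL_4(\F_3)$ and check that exactly one is isomorphic to $A_5\times C_2$), and you likewise delegate the final uniqueness count to Magma. What you add is a conceptual wrapper: an explicit existence argument (the mod-$3$ deleted permutation module gives a faithful $A_5\le \SL_4(\F_3)$, and adjoining $-I$ yields $A_5\times C_2$), which the paper does not bother to give. The one wrinkle in your module-theoretic discussion is the unresolved worry about a $3\oplus 1$ decomposition and Galois twists: this case cannot occur, because the two $3$-dimensional irreducible Brauer characters of $A_5$ in characteristic $3$ take the values $(1\pm\sqrt5)/2$ on the $5$-cycles and $5$ is a non-square mod $3$, so these modules are defined over $\F_9$ but not over $\F_3$; hence $A_5$ has no faithful $\F_3$-representation of dimension less than $4$, and any $A_5\le\GL_4(\F_3)$ acts absolutely irreducibly via the unique (rational-valued, hence outer-automorphism-invariant) $4$-dimensional module. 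Had you pushed this through, you would get a computer-free proof: absolute irreducibility forces the centralizer of such an $A_5$ to be the scalars $\{\pm I\}$, so the central involution of any $A_5\times C_2$ must be $-I$, and uniqueness of the faithful $4$-dimensional module gives conjugacy of all the copies of $A_5$ and hence of all the copies of $A_5\times C_2$. That would actually be stronger than the paper's argument; as written, your proof matches it in rigor but not in economy.
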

\begin{proof}
By computer calculation, we can check that $\GL_4(\F_3)$ has four conjugacy classes of subgroups of order $120$. They are
\begin{equation}
\begin{split}
&\Bigg\langle\sm 0&0&2&0\\0&2&0&1&\\1&0&0&0\\0&1&0&1 \esm, \sm1&1&1&2\\2&0&0&2\\2&1&0&0\\1&0&2&0\esm,\sm2&0&0&0\\0&2&0&0\\0&0&2&0\\0&0&0&2\esm \Bigg\rangle, \Bigg\langle \sm 1&1&0&2\\0&2&0&0\\2&1&2&2\\0&0&0&2 \esm, \sm2&0&1&1\\1&2&0&1\\0&0&1&0\\0&0&1&2\esm\Bigg\rangle \\
&\Bigg\langle\sm 2&1&2&2\\2&0&1&1\\1&1&2&0\\1&1&0&2\esm,\sm 0&0&0&2\\2&1&0&2\\2&2&0&1\\0&2&2&0\esm,\sm 2&0&0&0\\0&2&0&0\\0&0&2&0\\0&0&0&2\esm\Bigg\rangle \textrm{ and } \Bigg\langle\sm2&2&0&1\\0&1&0&0\\1&2&1&1\\0&0&0&1 \esm, \sm1&0&2&2\\2&1&0&2\\0&0&2&0\\0&0&2&1\esm\Bigg\rangle.
\end{split}
\end{equation}
We use Magma to check that $\Big\langle\sm 2&1&2&2\\2&0&1&1\\1&1&2&0\\1&1&0&2\esm,\sm 0&0&0&2\\2&1&0&2\\2&2&0&1\\0&2&2&0\esm,\sm 2&0&0&0\\0&2&0&0\\0&0&2&0\\0&0&0&2\esm\Big\rangle$ is the only the conjugacy class of subgroup of order $120$ which is isomorphic to $A_5 \times C_2$.
\end{proof}
\begin{lem} \label{lem:12}
$\GL_4(\F_3)$ does not contain a subgroup isomorphic to $A_5 \times V_4$.
\end{lem}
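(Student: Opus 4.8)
The plan is to reduce the statement to Schur's lemma (Lemma~\ref{lem:schur}) via the trivial observation that the Klein four-group is not cyclic. Suppose, for contradiction, that $G:=A_5\times V_4$ is a subgroup of $\GL_4(\F_3)$, and write $A$ for the image of $A_5\times\{1\}$ and $W\cong V_4$ for the image of $\{1\}\times V_4$; then $W$ lies in the centralizer $C_{\GL_4(\F_3)}(A)$. The crux of the argument is to show that $A$ acts irreducibly on the natural module $M:=(\F_3)^4$. Once this is established, Lemma~\ref{lem:schur} forces $C_{\GL_4(\F_3)}(A)$ to be cyclic, which is incompatible with it containing the non-cyclic group $W$.

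To prove the irreducibility, I would first note that $A_5$ admits no faithful $\F_3$-representation of dimension $\le 3$: the order $|A_5|=60$ is divisible by $5$, while $|\GL_1(\F_3)|=2$, $|\GL_2(\F_3)|=48$ and $|\GL_3(\F_3)|=11232$ are all coprime to $5$. Since $A_5$ is simple, this means that on any $\F_3[A]$-module of dimension $\le 3$ the group $A$ must act trivially (its kernel is a nontrivial normal subgroup, hence all of $A$). Hence, if $M$ had a nonzero proper submodule $N$, then $A$ would act trivially on both $N$ (dimension $\le 3$) and $M/N$ (dimension $\le 3$), so that $(g-1)^2=0$ on $M$ for every $g\in A$; in characteristic $3$ this yields $g^3=1$ for all $g\in A$, contradicting the existence of elements of order $2$ in $A\cong A_5$. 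Therefore $M$ is irreducible, and the proof concludes as above.

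The one step requiring genuine care is this irreducibility claim; everything afterwards is immediate, since a cyclic group contains no copy of $V_4$. One could alternatively build the argument on Lemma~\ref{lem:11}: every copy of $A_5\times C_2$ in $\GL_4(\F_3)$ is conjugate to the explicit subgroup exhibited there, so it would suffice to compute the centralizer of the $A_5$-factor of that concrete copy and check it contains no $V_4$; but the representation-theoretic route above is self-contained and needs no machine calculation. Incidentally, the same reasoning proves the slightly stronger fact that $\GL_4(\F_3)$ contains no subgroup of the form $A_5\times B$ with $B$ non-cyclic.
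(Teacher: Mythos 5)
Your proof is correct, but it takes a different route from the paper's. The paper does not touch the $A_5$ factor at all: it observes that $A_5\times V_4$ contains a copy of $C_{10}\times C_2$, and then invokes Lemma \ref{lem:singer} with $q=3$, $n=4$, $p=5$ (note $5$ divides $3^4-1=80$ but none of $3^k-1$ for $k<4$), which already packages together Maschke's theorem and Lemma \ref{lem:irred_cyclic} to conclude that the centralizer of any order-$5$ subgroup of $\GL_4(\F_3)$ is cyclic of order $80$; since the centralizer of the $C_5$ inside $C_{10}\times C_2$ is non-cyclic, this is an immediate contradiction. You instead prove that the whole $A_5$ factor acts irreducibly on $(\F_3)^4$ --- via the observation that $5\nmid\lvert\GL_d(\F_3)\rvert$ for $d\le 3$, so $A_5$ would act trivially on any proper submodule and quotient, forcing $(g-1)^2=0$ and hence $g^3=1$ for all $g$ --- and then apply Lemma \ref{lem:schur} to the non-cyclic centralizing subgroup $V_4$. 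Both are ``irreducibility plus Schur'' arguments; the paper's is shorter because irreducibility of the $C_5$-action comes for free from the primitive-prime-divisor hypothesis, whereas yours requires the (correct and cleanly executed) modular-representation count but is self-contained and mirrors the argument the paper itself uses for Lemma \ref{lem:16}. Your closing remark that the same reasoning excludes $A_5\times B$ for any non-cyclic $B$ is right, though the paper's version yields the comparable strengthening that no subgroup $C_5\times B$ with $B$ non-cyclic exists.
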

\begin{proof}
$A_5\times V_4$ contains an abelian subgroup isomorphic to $C_{10}\times C_2$. 
As a special case of Lemma \ref{lem:singer} (with $q=3$, $n=4$), the centralizer of a cyclic group of order $5$ in $\GL_4(\F_3)$ is cyclic, of order $3^4-1=80$. Now of course, if $\GL_4(\F_3)$ contained a subgroup isomorphic to $C_{10}\times C_2$, then the centralizer of a respective subgroup of order $5$ would be non-cyclic. This ends the proof. 
\end{proof}

\begin{lem} \label{lem:13}
There exist a unique conjugacy class of elements of order $5$ in $GL_4(\F_3)$. Furthermore, there is a unique non-trivial linear action of $C_5$ on $(\F_3)^4$, and this action is irreducible.
\end{lem}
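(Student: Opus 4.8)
The plan is to obtain both assertions from Lemma~\ref{lem:singer}, exactly as in the $\GL_4(\F_2)=A_8$ analogue (Lemma~\ref{lem:10}), supplemented by one short observation about rational canonical forms. First I would record that $5$ is a primitive prime divisor of $3^4-1=80$: since $3^1\equiv 3$, $3^2\equiv 4$, $3^3\equiv 2$ and $3^4\equiv 1\pmod 5$, the multiplicative order of $3$ modulo $5$ is $4$, so $5\mid 3^4-1$ while $5\nmid 3^k-1$ for $1\le k<4$. Hence Lemma~\ref{lem:singer} applies with $q=3$, $n=4$, $p=5$, and part~i) of that lemma is precisely the ``furthermore'' statement: there is a unique non-trivial linear action of $C_5$ on $(\F_3)^4$, and it is irreducible.

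It then remains to show that the elements of order $5$ in $\GL_4(\F_3)$ form a single conjugacy class. Note this is formally a little stronger than the uniqueness up to conjugacy of the subgroup $C_5$ furnished by part~i), since a priori a generator $g$ need not be conjugate to its powers $g^2,g^3,g^4$. I would argue via the minimal polynomial. The same order computation shows $\Phi_5(x)=x^4+x^3+x^2+x+1$ is irreducible over $\F_3$: its roots are primitive fifth roots of unity, which generate $\F_{3^4}$ and no smaller field, so this degree-$4$ polynomial stays irreducible. If $g\in\GL_4(\F_3)$ has order $5$, its minimal polynomial divides $x^5-1=(x-1)\Phi_5(x)$; it cannot be $x-1$ (else $g=1$) and cannot have degree $5>4$, so it equals $\Phi_5(x)$, which is therefore also the characteristic polynomial. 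Hence $g$ is similar over $\F_3$ to the companion matrix of $\Phi_5$, so all elements of order $5$ — in particular $g,g^2,g^3,g^4$ — lie in one conjugacy class. (This argument also re-proves irreducibility of the action.)

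I do not expect a genuine obstacle here; the only subtlety is exactly the point just flagged — distinguishing conjugacy of the cyclic subgroups $C_5$ from conjugacy of their generators — and the rational-canonical-form argument disposes of it cleanly, since $g$ and each of $g^2,g^3,g^4$ share the characteristic polynomial $\Phi_5$. As an alternative kept entirely within the Singer-cycle picture of Lemma~\ref{lem:singer}: the centralizer of $\langle g\rangle$ is a cyclic Singer subgroup of order $80$, and its normalizer in $\GL_4(\F_3)$ induces the full Galois group $\Gal(\F_{81}/\F_3)=C_4$ acting by $x\mapsto x^3$; as $3$ generates $(\Z/5\Z)^\times$, this conjugates $g$ successively to $g^3$, $g^9=g^4$ and $g^{27}=g^2$, again yielding a single class.
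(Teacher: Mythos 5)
Your proposal is correct and takes essentially the same route as the paper, whose entire proof is the one-line citation ``this follows from Lemma~\ref{lem:singer} with $q=3$, $n=4$.'' Your extra rational-canonical-form argument (every order-$5$ element has minimal and characteristic polynomial $\Phi_5$, hence is similar to its companion matrix) addresses a genuine subtlety the paper glosses over --- Lemma~\ref{lem:singer} as proved there only gives conjugacy of the order-$5$ \emph{subgroups}, not of their generators --- so your write-up is, if anything, more complete than the original.
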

\begin{proof}
This again follows directly from Lemma \ref{lem:singer}, with $q=3$ and $n=4$.
\end{proof}

%
\subsubsection{Structure of $\GL_3(\F_5)$}
We will also use the structures of $\GL_3(\F_5)$.
5
\begin{lem} \label{lem:15}
$\GL_3(\F_5)$ contains a unique conjugacy class of subgroups isomorphic to $A_5 \times C_2$.
\end{lem}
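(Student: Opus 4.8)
The plan is to reduce the statement to the $5$-modular representation theory of $A_5$. Since $(A_5\times C_2)'\cong A_5$ and $Z(A_5\times C_2)\cong C_2$, every subgroup $G\le \GL_3(\F_5)$ isomorphic to $A_5\times C_2$ is built from its derived subgroup $G'\cong A_5$ together with a central involution. So I would split the proof into two parts: (a) all subgroups of $\GL_3(\F_5)$ isomorphic to $A_5$ form a single conjugacy class, and (b) for a fixed such $A_5$, the only central involution available to complete it to $A_5\times C_2$ is $-I$.

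For (a), first I would recall that $A_5\cong\PSL_2(\F_5)$ and that we are in defining characteristic $5$: the irreducible $\overline{\F_5}$-representations of $A_5$ have dimensions $1$, $3$ and $5$, the $3$-dimensional one being $\mathrm{Sym}^2$ of the natural $\SL_2(\F_5)$-module (which factors through $\PSL_2(\F_5)$), is absolutely irreducible, and is already defined over the prime field $\F_5$. Now a faithful $3$-dimensional $\F_5$-representation of $A_5$ cannot have all composition factors trivial --- otherwise the image would lie in a unipotent subgroup, i.e.\ a $5$-group, which is impossible for the perfect group $A_5$ --- so after extension of scalars it must be the $3$-dimensional irreducible above; by uniqueness of $\F_5$-forms of an absolutely irreducible module defined over $\F_5$, it is isomorphic to $\mathrm{Sym}^2$ of the natural module. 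Hence any two subgroups of $\GL_3(\F_5)$ isomorphic to $A_5$ are conjugate: an abstract isomorphism between them identifies their inclusion maps with equivalent representations, and an intertwiner conjugates one onto the other.

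For (b), fix $H\le\GL_3(\F_5)$ with $H\cong A_5$. By part (a), $H$ acts absolutely irreducibly on $(\F_5)^3$, so by Schur's lemma (Lemma \ref{lem:schur}, using absolute irreducibility) $C_{\GL_3(\F_5)}(H)$ consists exactly of the scalar matrices, cyclic of order $4$, with unique involution $-I$. Also $\det$ is trivial on $H$, since $\det(\mathrm{Sym}^2 T)=(\det T)^3=1$ for $T\in\SL_2(\F_5)$, whereas $\det(-I)=-1\neq 1$ in $\F_5$; hence $-I\notin H$ and $\langle H,-I\rangle=H\times\langle -I\rangle\cong A_5\times C_2$. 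Conversely, if $G\cong A_5\times C_2$, then $G'\cong A_5$ is conjugate to $H$, and after conjugating, $Z(G)\cong C_2$ centralizes $G'=H$ and meets $H$ trivially (as $Z(G)\cap H\le Z(H)=1$), forcing $Z(G)=\langle -I\rangle$ and $G=\langle G',Z(G)\rangle=\langle H,-I\rangle$. This yields both existence and a unique conjugacy class.

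The one place needing genuine input is step (a): proving that $A_5$ has, up to equivalence, a unique faithful $3$-dimensional representation over $\F_5$. This rests on the $5$-modular character theory of $A_5$ (there are only three irreducible Brauer characters, of degrees $1$, $3$, $5$, and the degree-$3$ one is realized by $\mathrm{Sym}^2$ of the natural module) together with the descent fact that an absolutely irreducible module defined over the prime field has a unique form over that field; the rest is a short deduction. As an alternative --- in the spirit of Lemma \ref{lem:11} --- one could instead enumerate by computer the conjugacy classes of order-$120$ subgroups of $\GL_3(\F_5)$ and check which are isomorphic to $A_5\times C_2$, but the argument above needs no machine computation.
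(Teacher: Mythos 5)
Your proof is correct, but it takes a genuinely different route from the paper. The paper's proof is computational: it enumerates (with Magma) the four conjugacy classes of subgroups of order $120$ in $\GL_3(\F_5)$ and checks that exactly one of them is isomorphic to $A_5\times C_2$, exactly as in the proof of Lemma \ref{lem:11}. You instead exploit that $5$ is the defining characteristic for $A_5\cong\PSL_2(\F_5)$: the irreducible $\overline{\F_5}$-modules have dimensions $1,3,5$, so a faithful $3$-dimensional $\F_5$-representation is forced to be (absolutely) irreducible and, by Noether--Deuring descent, equivalent to $\mathrm{Sym}^2$ of the natural $\SL_2(\F_5)$-module; this gives a single conjugacy class of $A_5$'s, and then Schur's lemma (the absolutely irreducible strengthening of Lemma \ref{lem:schur}) pins the complementary central involution down to $-I$. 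All the steps check out, including the small points: perfectness of $A_5$ rules out a unipotent image and forces $\det|_H=1$, so $-I\notin H$; and $A_5\times C_2=(A_5\times C_2)'\times Z(A_5\times C_2)$ reduces the general subgroup to $\langle H,-I\rangle$. What each approach buys: the paper's computation is uniform across Lemmata \ref{lem:11}, \ref{lem:15} and \ref{lem:18} and needs no representation theory, while your argument is machine-free, identifies the subgroup explicitly as $\mathrm{Sym}^2(\PSL_2(\F_5))\times\{\pm I\}$, and as a byproduct reproves irreducibility of the $A_5$-action used in the paper's proof of Lemma \ref{lem:16}. The only ingredient you should cite rather than assert is the list of $5$-modular irreducible degrees of $A_5$ (or equivalently the irreducibility of $\mathrm{Sym}^2$ of the natural module in characteristic $5$), which is standard.
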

\begin{proof}
By computer calculation, we can check that $\GL_4(\F_3)$ has four conjugacy classes of subgroups of order $120$. They are
\begin{equation}
\begin{split}
&\Big\langle\sm 2&1&2\\3&0&0\\2&3&4 \esm, \sm 0&1&1\\3&4&1\\4&2&1 \esm,\sm 1&3&2\\1&3&1\\1&4&0 \esm \Big\rangle,\Big\langle\sm 4&0&1\\0&4&0\\4&1&0 \esm, \sm 2&3&1\\3&0&3\\3&4&4 \esm \Big\rangle,\\
&\Big\langle\sm 1&0&0\\0&1&0\\1&4&4 \esm, \sm 1&0&4\\2&1&1\\3&0&3 \esm, \sm 4&0&0\\0&4&0\\0&0&4 \esm \Big\rangle, \textrm{ and } \Big\langle\sm 1&0&4\\0&1&0\\1&4&0 \esm, \sm 3&2&4\\2&0&2\\2&1&1 \esm \Big\rangle.
\end{split}
\end{equation}
We use Magma to check that $\Big\langle\sm 1&0&0\\0&1&0\\1&4&4 \esm, \sm 1&0&4\\2&1&1\\3&0&3 \esm, \sm 4&0&0\\0&4&0\\0&0&4 \esm \Big\rangle$ is the only the conjugacy class of subgroup of order $120$ which is isomorphic to $A_5 \times C_2$.
\end{proof}
\begin{lem} \label{lem:16}
$\GL_3(\F_5)$ does not contain a subgroup isomorphic to $A_5 \times V_4$.
\end{lem}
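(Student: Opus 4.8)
The plan is to run a representation-theoretic argument: if $A_5\times V_4$ were a subgroup of $\GL_3(\F_5)$, then the $V_4$-factor would centralize the $A_5$-factor, so it suffices to show that the centralizer of $A_5$ in $\GL_3(\F_5)$ is cyclic (and hence cannot contain $V_4\cong C_2\times C_2$). Concretely, suppose $A_5\times V_4\le \GL_3(\F_5)$ and write $V=(\F_5)^3$ for the natural module. The first and only substantive step is to prove that the subgroup $A_5\times\{1\}$, which acts faithfully on $V$ since it is a subgroup of $\GL_3(\F_5)$, acts \emph{irreducibly} on $V$.

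To see irreducibility, suppose instead that there is a proper nonzero submodule $W\subsetneq V$, so $\dim_{\F_5}W\in\{1,2\}$. In either case $A_5$ acts on a $2$-dimensional subquotient $U$ of $V$ (namely $U=V/W$ if $\dim W=1$, and $U=W$ if $\dim W=2$). Picking a basis of $V$ adapted to $W$, one checks directly that the kernel $N$ of the induced action $A_5\to\GL(U)\cong\GL_2(\F_5)$ lies inside a group of triangular matrices of order $5^2\cdot(5-1)=100$; hence $|N|$ divides $\gcd(60,100)=20$. Since $N$ is normal in $A_5$ and the only normal subgroups of $A_5$ have order $1$ or $60$, we conclude $N=\{1\}$, so $A_5$ embeds into $\GL_2(\F_5)$ — contradicting Lemma \ref{lem:14}. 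Therefore $A_5$ acts irreducibly on $V$. Now Lemma \ref{lem:schur} (Schur's lemma) shows that the centralizer $C_{\GL_3(\F_5)}(A_5)$ is cyclic, while the $V_4$-factor of $A_5\times V_4$ commutes with the $A_5$-factor and is non-cyclic; this contradiction completes the argument.

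The only step that is not purely formal is the irreducibility of the $A_5$-action, and that reduces quickly — via the observation that the kernel of the action on any $2$-dimensional subquotient is a normal subgroup of $A_5$ of order dividing $20$, hence trivial — to the already-proved fact (Lemma \ref{lem:14}) that $\GL_2(\F_5)$ has no non-abelian simple subgroup. (Alternatively one could deduce irreducibility from the fact that $A_5$ has no non-trivial irreducible $\F_5$-module of dimension below $3$, but routing through Lemma \ref{lem:14} keeps the proof self-contained.) As with Lemmas \ref{lem:11} and \ref{lem:15}, one could also verify the statement by computer, e.g.\ by computing in $\GL_3(\F_5)$ the centralizer of the representative of the unique conjugacy class of subgroups $\cong A_5\times C_2$ produced in Lemma \ref{lem:15} and checking that it contains no copy of $V_4$; but the argument above needs no machine computation.
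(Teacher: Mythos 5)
Your proof is correct and takes essentially the same route as the paper's: irreducibility of any subgroup $A_5\le \GL_3(\F_5)$ deduced from Lemma \ref{lem:14}, followed by Schur's lemma (Lemma \ref{lem:schur}) applied to the non-cyclic group $V_4$ sitting inside the centralizer of that $A_5$. The only difference is that the paper asserts the irreducibility as an immediate consequence of Lemma \ref{lem:14}, while you spell out the reduction to a $2$-dimensional subquotient and the triviality of the kernel; that added detail is accurate.
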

\begin{proof}
By Lemma \ref{lem:14}, any subgroup $A_5\le \GL_3(\F_5)$ has to act irreducibly. Since $A_5\times V_4$ has non-cyclic center, the claim now follows immediately from Lemma \ref{lem:schur}.
\end{proof}

\subsubsection{Structures of $\GL_5(\F_2)$ and $\GL_6(\F_2)$}
\begin{lem}
\label{lem:17}
$\GL_5(\F_2)$ does not contain a subgroup isomorphic to $PSL_2(8)$.
\end{lem}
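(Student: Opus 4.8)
The plan is to exhibit an element order realised inside $\PSL_2(8)$ but not inside $\GL_5(\F_2)$, in the same spirit as Lemma \ref{lem:9}. A pure order/Lagrange argument will not work here: one has $|\PSL_2(8)| = 504 = 2^3\cdot 3^2\cdot 7$, whereas $|\GL_5(\F_2)| = (2^5-1)(2^5-2)(2^5-2^2)(2^5-2^3)(2^5-2^4) = 2^{10}\cdot 3^2\cdot 5\cdot 7\cdot 31$, so $|\PSL_2(8)|$ does divide $|\GL_5(\F_2)|$, and finer information is needed.

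First I would record the element orders in $\PSL_2(8)$. Since $q=8$ is even, $\PSL_2(8) = \SL_2(8)$, and it contains cyclic maximal tori of orders $q-1 = 7$ and $q+1 = 9$ (and elementary abelian unipotent subgroups of order $8$); in particular it contains an element of order $9$. Next I would show that $\GL_5(\F_2)$ has no element of order $9$: such an element $g$ would have order coprime to the characteristic, hence be semisimple and diagonalisable over $\overline{\F_2}$, so its order would be the least common multiple of the multiplicative orders of its eigenvalues; thus some eigenvalue $\zeta$ would be a primitive $9$th root of unity. But $\F_2(\zeta) = \F_{2^k}$ with $k$ the multiplicative order of $2$ modulo $9$, and since $2,4,8,16\equiv 7,32\equiv 5 \pmod 9$ are all $\ne 1$ while $2^6 = 64 \equiv 1 \pmod 9$, we get $k=6$. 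Hence the minimal polynomial of $\zeta$ over $\F_2$ has degree $6$ and cannot divide the degree-$5$ characteristic polynomial of $g \in \GL_5(\F_2)$, a contradiction.

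Combining the two steps, any embedding $\PSL_2(8) \hookrightarrow \GL_5(\F_2)$ would carry an element of order $9$ to one in $\GL_5(\F_2)$, which is impossible, so no such subgroup exists. I do not expect a real obstacle: the only computation is the elementary verification that $9 \nmid 2^k-1$ for $1\le k\le 5$. (If one prefers a representation-theoretic route, one may instead invoke Steinberg's tensor product theorem, which gives that the irreducible $\F_2[\PSL_2(8)]$-modules have $\F_2$-dimensions $1,6,8,12$; then any $\F_2[\PSL_2(8)]$-module of dimension at most $5$ has only trivial composition factors, so $\PSL_2(8)$ would act through a $2$-group and hence trivially, contradicting faithfulness. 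This variant additionally shows the minimal faithful $\F_2$-representation degree of $\PSL_2(8)$ is $6$.)
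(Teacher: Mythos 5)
Your proof is correct and takes essentially the same route as the paper: both arguments exhibit the cyclic subgroup of order $9$ in $\PSL_2(\F_8)=\SL_2(\F_8)$ and then rule out any element (or subgroup) of order $9$ in $\GL_5(\F_2)$ using that the multiplicative order of $2$ modulo $9$ is $6>5$. The paper phrases this last step via Maschke's theorem and irreducible cyclic subgroups (forcing $9\mid 2^d-1$ for some $d\le 5$), while you use eigenvalues and the degree of the minimal polynomial of a primitive ninth root of unity; these are the same computation in slightly different clothing.
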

\begin{proof}
The group $PSL_2(\F_8)=SL_2(\F_8)$ contains cyclic subgroups of order $\frac{8^2-1}{8-1}=9$. However, $\GL_5(\F_2)$ does not contain any such subgroups. Indeed, since $9$ is a prime power, Maschke's theorem implies that the existence of such a cyclic subgroup would enforce the existence of an irreducible cyclic subgroup of order $9$ in some $\GL_d(\F_2)$ with $d\le 5$. Then $2^d-1$ would have to be divisible by $9$, which is not the case for any such $d$. This concludes the proof.
\end{proof}

\begin{lem}
\label{lem:18}
$\GL_6(\F_2)$ contains a unique conjugacy class of subgroups isomorphic to $PSL_2(\F_8)$.
\end{lem}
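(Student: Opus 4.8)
The plan is to classify the faithful $6$-dimensional $\F_2$-representations of $S:=\PSL_2(\F_8)=\SL_2(\F_8)$ and to observe that, up to isomorphism, there is essentially only one of them, whence any two copies of $S$ inside $\GL_6(\F_2)$ are conjugated by an intertwining operator. For existence, the natural action of $\SL_2(\F_8)$ on $\F_8^2$, viewed $\F_2$-linearly via a choice of $\F_2$-basis of $\F_8^2\cong\F_2^6$, gives an embedding $S=\SL_2(\F_8)\le\GL_2(\F_8)\le\GL_6(\F_2)$; so the conjugacy class in question is non-empty. Write $M$ for the resulting $6$-dimensional $\F_2[S]$-module.

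First I would check that $M$ is irreducible over $\F_2$. Extending scalars, $M\otimes_{\F_2}\overline{\F_2}\cong V\oplus V^{(2)}\oplus V^{(4)}$, the natural $2$-dimensional $\overline{\F_2}[\SL_2(\F_8)]$-module together with its two Frobenius twists; these are pairwise non-isomorphic simple modules which are cyclically permuted by $\Gal(\overline{\F_2}/\F_2)$. An $\F_2$-rational submodule of $M$ would give rise to a Galois-stable submodule of $V\oplus V^{(2)}\oplus V^{(4)}$, but the only such submodules are $0$ and the whole space; hence $M$ is $\F_2$-irreducible. Next I would invoke the list of dimensions of the irreducible $\F_2[S]$-modules, namely $1,6,8,12$ (equivalently, the absolutely irreducible $\overline{\F_2}[S]$-modules have dimensions $1,2,2,2,4,4,4,8$, by the Steinberg tensor product theorem for $\SL_2$ in defining characteristic, or by the modular Atlas), with $M$ the unique one of dimension $6$. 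It then follows that the composition factors of any $6$-dimensional $\F_2[S]$-module are either a single copy of $M$, or six copies of the trivial module. In the latter case the image of $S$ in $\GL_6(\F_2)$ would be conjugate into the upper unitriangular matrices, hence a $2$-group, contradicting simplicity of $S$; so a \emph{faithful} $6$-dimensional $\F_2[S]$-module has $M$ as its only composition factor and is therefore isomorphic to $M$.

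To conclude, let $H_1,H_2\le\GL_6(\F_2)$ with $H_i\cong S$, and choose isomorphisms $\phi_i\colon S\to H_i$. Composing $\phi_i$ with the inclusion $H_i\hookrightarrow\GL_6(\F_2)$ gives a faithful $6$-dimensional $\F_2$-representation of $S$, which by the above is isomorphic to $M$. Hence the two representations are isomorphic as $\F_2[S]$-modules, so there is $g\in\GL_6(\F_2)$ intertwining them, and taking images yields $H_2=gH_1g^{-1}$. This gives uniqueness of the conjugacy class.

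The main obstacle is the representation-theoretic input: the dimensions of the $2$-modular irreducibles of $\PSL_2(\F_8)$ and the fact that $M$ is the unique one of dimension $6$. Conceptually this is immediate from the description of irreducible modules of $\SL_2$ over a field of its defining characteristic; in the computational style already used for Lemmas \ref{lem:11} and \ref{lem:15} it can also simply be verified with Magma. The remaining ingredients — the field-reduction embedding, the unipotency argument excluding the all-trivial case, and the passage from isomorphic modules to conjugate subgroups — are routine.
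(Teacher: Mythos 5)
Your proof is correct, and for the uniqueness part it takes a genuinely different route from the paper: the paper obtains existence exactly as you do, via $\SL_2(\F_8)\le\GL_2(\F_8)\le\GL_6(\F_2)$, but then simply asserts that uniqueness ``can be verified with Magma.'' You replace that computation with a clean representation-theoretic argument: the restriction of scalars $M$ of the natural module is $\F_2$-irreducible because its scalar extension $V\oplus V^{(2)}\oplus V^{(4)}$ is multiplicity-free with summands cyclically permuted by Frobenius; the Steinberg tensor product theorem gives $1,6,8,12$ as the dimensions of the irreducible $\F_2[\PSL_2(\F_8)]$-modules, so $M$ is the unique faithful $6$-dimensional module (the all-trivial composition series being excluded by unipotency); and an intertwiner then conjugates any two copies of $\PSL_2(\F_8)$ onto each other. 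All steps check out, including the passage from module isomorphism to subgroup conjugacy (precomposing with an arbitrary isomorphism $S\to H_i$ still yields a faithful $6$-dimensional module, so the choice of $\phi_i$ is harmless). What your approach buys is a computer-free, self-contained proof whose intermediate output --- the explicit identification of the unique $6$-dimensional module and its behaviour under restriction to cyclic subgroups --- is exactly the structural information the paper goes on to use in Lemma \ref{lem:19} and in the analysis of the $C_7$-action on $(C_2)^6$; the cost is reliance on the $2$-modular representation theory of $\SL_2$ in defining characteristic, which is standard but heavier machinery than the paper invokes anywhere else.
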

\begin{proof}
Since $PSL_2(\F_8)=SL_2(\F_8)\le \GL_2(\F_8)$, the existence follows immediately from the well-known fact that $\GL_{n\cdot d}(\F_q)$ contains subgroups isomorphic to $\GL_{n}(\F_{q^d})$. The uniqueness can once again be verified with Magma.
\end{proof}

\begin{lem}
\label{lem:19}
$\GL_6(\F_2)$ does not contain subgroups isomorphic to $PSL_2(\F_8)\times C_2$.
\end{lem}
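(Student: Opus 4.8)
The plan is to argue by contradiction and to reduce the claim to the computation of the centralizer of a single well-chosen cyclic subgroup. Suppose $\GL_6(\F_2)$ contains a subgroup $H\cong PSL_2(\F_8)\times C_2$. As already recalled in the proof of Lemma \ref{lem:17}, $PSL_2(\F_8)=SL_2(\F_8)$ contains a cyclic subgroup of order $\frac{8^2-1}{8-1}=9$; fix a generator $s_0$ of such a subgroup inside the first direct factor of $H$, and let $t$ be a generator of the $C_2$ direct factor. Writing $s\in\GL_6(\F_2)$ for the image of $s_0$, we then have that $s$ has order $9$, $t$ has order $2$, and $s$ and $t$ commute.

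Next I would pin down the $\F_2[\langle s\rangle]$-module structure of $\F_2^6$. Since the multiplicative order of $2$ modulo $9$ equals $6$, the ninth cyclotomic polynomial $\Phi_9(x)=x^6+x^3+1$ is irreducible over $\F_2$, and $x^9-1=(x+1)(x^2+x+1)(x^6+x^3+1)$ in $\F_2[x]$. Because $s$ has order exactly $9$, we have $s^3\ne 1$, so the minimal polynomial of $s$ acting on $\F_2^6$ does not divide $x^3-1$; hence it is divisible by $\Phi_9$, and by a degree count the minimal and characteristic polynomials of $s$ both equal $\Phi_9$. Therefore $\F_2^6\cong \F_2[x]/(\Phi_9(x))$ as an $\F_2[\langle s\rangle]$-module, and since $\Phi_9$ is irreducible this module is irreducible. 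Now Lemma \ref{lem:irred_cyclic}, applied with $n=6$, $q=2$ and $U=\langle s\rangle$ (of order $9$, coprime to $2$, acting irreducibly), shows that the centralizer of $\langle s\rangle$ in $\GL_6(\F_2)$ is cyclic of order $2^6-1=63$.

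To finish, observe that $t$ lies in this centralizer (it commutes with $s$), so $t$ is an element of order $2$ in a cyclic group of order $63$ --- impossible, as $63$ is odd. This contradiction shows that no such subgroup $H$ exists. The only genuinely delicate point is the irreducibility of $\F_2^6$ as an $\F_2[\langle s\rangle]$-module: one cannot simply invoke Lemma \ref{lem:singer} here, since $n=6$, $q=2$ is precisely the Zsigmondy exceptional case in which $q^n-1=63$ has no primitive prime divisor, so the irreducibility must instead be read off from the factorization of $x^9-1$ over $\F_2$ as above. (Alternatively one could appeal to the uniqueness in Lemma \ref{lem:18} to place $PSL_2(\F_8)$ inside $\GL_2(\F_8)\le\GL_6(\F_2)$ and compute the centralizer there, but the argument via the order-$9$ element avoids this and is self-contained.)
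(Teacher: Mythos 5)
Your proof is correct and follows essentially the same route as the paper: show that an order-$9$ subgroup must act irreducibly on $(\F_2)^6$, invoke Lemma \ref{lem:irred_cyclic} to conclude its centralizer is cyclic of order $63$, and derive a contradiction because the $C_2$ factor would have to lie in that odd-order centralizer. The only (minor) difference is that you establish irreducibility directly from the factorization of $x^9-1$ over $\F_2$, whereas the paper reuses the Maschke-theorem argument from Lemma \ref{lem:17}; your parenthetical remark that Lemma \ref{lem:singer} is inapplicable here because $q=2$, $n=6$ is the Zsigmondy exception is accurate and well observed.
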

\begin{proof}
By Maschke's theorem (and using the proof of Lemma \ref{lem:17}), any cyclic subgroup of order $9$ in $\GL_6(\F_2)$ has to act irreducibly. By Lemma \ref{lem:irred_cyclic}, the centralizer of such a subgroup is then cyclic of order $2^6-1=63$. However, the centralizer of an order-$9$ subgroup in $PSL_2(\F_8)\times C_2$ is of course of even order. This concludes the proof.
\end{proof}
\section{Example: $K=\Q(\sqrt{22268})$}
\label{sec:ex4}
Let $K$ be the real quadratic number field $\Q(\sqrt{22268})$. We determine the Galois group of the maximal unramified extension of $K$.
\begin{thm}
\label{thm:22268}
Let $K$ be the real quadratic field $\Q(\sqrt{22268})$. Then, under the assumption of GRH, $\Gal(K_{ur}/K)$ is isomorphic to $A_5 \times C_2$. 
\end{thm}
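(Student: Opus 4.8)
The plan is to establish the theorem in two halves: a lower bound, exhibiting an explicit unramified extension $L/K$ with $\Gal(L/K)\cong A_5\times C_2$; and an upper bound, showing that $K_{ur}=L$, i.e.\ that there is no strictly larger unramified extension. For the lower bound, I would first compute the class group of $K=\Q(\sqrt{22268})$; using the factorization of the discriminant and genus theory one expects $\Cl(K)$ to have a nontrivial $2$-part (accounting for a $C_2$) together with possibly a further cyclic factor. The key input is the Klüners--Malle database: one locates a degree-$5$ (or degree-$6$) number field $F$ whose Galois closure $\widetilde{F}$ over $\Q$ has Galois group (the appropriate extension of) $S_5$ or $S_5\times C_2$, such that $\widetilde{F}\cdot K/K$ is unramified at all finite places (check via the relative discriminant / conductor-discriminant formula) and also unramified at the infinite places (this is where the real signature of $K$ and of $F$ matters, and why the field $22268$ with its particular signature is chosen). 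One then verifies $\Gal(\widetilde{F}K/K)\cong A_5\times C_2$ directly: the $A_5$ comes from the geometric/derived part and the $C_2$ from a quadratic subextension coming from $\Cl(K)$, and Lemma~\ref{lem:suzuki} (extensions of centerless groups) together with Proposition~\ref{prop:B} pins down that the extension splits as a direct product rather than, say, $\SL_2(\F_5)$ or a non-split $C_2$-extension.

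For the upper bound I would argue that $\Gal(K_{ur}/K)$ is a finite group $G$ surjecting onto $A_5\times C_2$, and rule out every proper enlargement. First, one shows $K_{ur}/K$ is finite and bounds its root discriminant: since $L/K$ is unramified, $|d_L|^{1/n_L}=|d_K|^{1/n_K}$, and any unramified $M/L$ would have the same root discriminant, so by Proposition~\ref{prop:y} (or its GRH-strengthened version via Martinet's Table~III, equations (\ref{D})--(\ref{f})) the absence of further unramified extensions follows once $|d_K|^{1/n_K}$ is smaller than the relevant Odlyzko/GRH bound $B(\text{degree},\text{signature})$ evaluated at $[L:K]\cdot$ the degree of $L$. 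If that inequality already closes the argument, then $K_{ur}=L$ and we are done. If it does not immediately close, one proceeds structurally: the kernel $N=\Gal(K_{ur}/L)$ is a normal subgroup of $G$, and one analyzes the possible composition factors. Using Lemma~\ref{2.5} ($p$-rank theorem) applied to the various subfields, Lemma~\ref{2.6} (Taussky, controlling $2$-class fields of fields with small $2$-class group), and the fact that an unramified extension of $K$ must in particular be unramified over the appropriate subfields, one restricts the primes $p$ that can divide $|N|$. The group theory collected in Section~\ref{group} then finishes the job: for instance, if $A_5$ acts on an $\F_p$-vector space coming from a $p$-elementary abelian layer, Lemmas~\ref{lem:14}, \ref{lem:9}, \ref{lem:11}--\ref{lem:16} constrain or exclude configurations like $A_5\times V_4$ inside $\GL_3(\F_5)$ or $\GL_4(\F_3)$, and similarly for $\GL_4(\F_2)\cong A_8$; combined with Schur-multiplier information (Lemma~\ref{lem:A}, Lemma~\ref{lem:b}) this rules out nonsplit central extensions like $\SL_2(\F_5)$.

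The main obstacle I expect is precisely the upper-bound step: the GRH discriminant bound from Martinet's table is a genuine numerical race, and it is plausible that it does \emph{not} outright exclude one more abelian (class field) layer on top of $L$, so one must do honest class field theory over $L$ — compute (or bound) $\Cl(L)$ and its $A_5\times C_2$-module structure — and combine this with the group-theoretic exclusions to show no such layer survives the unramifiedness constraint over $\Q$. The awkward cases are the small primes $p\in\{2,3,5\}$, where $A_5$ has interesting modular representations and where the relevant $\GL_n(\F_p)$ contains subgroups close to what we want; this is exactly why the paper assembled the battery of lemmas about $\GL_2(\F_p)$, $\GL_4(\F_2)$, $\GL_4(\F_3)$ and $\GL_3(\F_5)$ in Section~\ref{group}. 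A secondary subtlety is bookkeeping at the infinite places: because $K$ is real, "unramified at all places" is a real constraint, and one must be careful that the chosen $L$ and any hypothetical extension are totally split (or appropriately behaved) at the archimedean places, which feeds into the correct signature to plug into $B(\cdot,\cdot,\cdot)$.
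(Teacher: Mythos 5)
Your outline matches the paper's strategy in broad strokes (explicit $A_5$-extension from the Kl\"uners--Malle database, Abhyankar to get unramifiedness over the Hilbert class field $K_1=\Q(\sqrt{76},\sqrt{293})$, then root-discriminant bounds plus module-theoretic exclusions), and you correctly diagnose that the bare GRH bound does not close the argument: it only gives $[K_{ur}:M]<134$ for $M=LK_1$, which leaves room for layers of type $(C_2)^k$ ($k\le 7$), $(C_3)^4$ and $(C_5)^3$. But at exactly the point where you say ``one must do honest class field theory over $L$ --- compute (or bound) $\Cl(L)$'' your plan stops; computing the class group of a degree-$240$ field is not feasible, and the paper's proof lives entirely in the devices that replace such a computation. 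Three of these are missing from your proposal. First, the \emph{descent trick}: once the $\GL_n(\F_p)$ lemmas force the image of $\Gal(M/\Q)\simeq A_5\times V_4$ in $\Aut(\Gal(T'/M))$ to be $A_5$ or $A_5\times C_2$, some index-$2$ or index-$4$ subgroup of $V_4$ acts trivially, so the hypothetical abelian layer descends to a field $S$ over $L$ (or over $L(\sqrt{76})$, $LK$, $L(\sqrt{293})$) \emph{not containing} $K$; after showing $S/L$ is unramified (using irreducibility of the $C_5$-action on $(\F_p)^4$ to kill ramification at $2$), the root discriminant drops to $\sqrt{19\cdot 293}\approx 74.6$, far below the bound $B(960,960,0)$, giving an immediate contradiction. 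Second, for the cases where descent lands in a field still containing $K$, the paper sharpens Martinet's inequality by feeding the \emph{local splitting data} at $2$ (inertia degree $f_2=5$, read off from the factorization of the defining polynomial mod $2$ together with the irreducibility of the $C_5$-module) into the sum $f$ of (\ref{f}), pushing the lower bound to $157.2\ldots > \sqrt{22268}$; your proposal cites (\ref{D})--(\ref{f}) but never uses the prime-ideal sum, which is the only reason the ``numerical race'' is won. (The $5$-class case additionally needs genuine auxiliary class-group computations of degree-$24$ subfields combined with Taussky's Lemma \ref{2.6} and the $p$-rank theorem, again applied to manageable subfields rather than to $M$.)

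The third missing ingredient is the exclusion of a further unramified $A_5$-layer on top of $M$, which your plan subsumes under ``rule out every proper enlargement'' without a mechanism. The paper uses Lemma \ref{lem:suzuki} to force $\Gal(F/K_1)\simeq A_5\times A_5$, descends the second $A_5$ to an extension $F_2$ of $\Q$ with $\Gal(F_2/\Q)\simeq A_5\times C_2$ or $S_5$ and tightly controlled ramification, and then derives a contradiction from the tables of quintic fields of small discriminant \cite{Schwarz-1994} (or, in one subcase, from another local refinement of the Martinet bound at $19$). Without this step --- or some substitute for it --- the proof is not complete even granting all the abelian exclusions, since $|A_5|=60<134$ is well within the degree bound. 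So: right skeleton, correct identification of where the difficulty lies, but the concrete ideas that resolve that difficulty (descent to subfields with smaller root discriminant, local input into Martinet's $f$, and the quintic-field tables) are absent rather than merely unelaborated.
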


The class number of $K$ is $2$, i.e., $\Cl(K) \simeq C_2$. Let $K_1$ be the Hilbert class field of $K$. Then $K_1$ can be written as $\Q(\sqrt{76},\sqrt{293})$. By computer calculation, we know that the class group of $K_1$ is trivial, i.e., $K_1$ has no nontrivial solvable unramified extensions.
\subsection{An unramified $A_5$-extension of $K_1$}
Let $K=\Q(\sqrt{22268})$ and let $L$ be the splitting field of
\begin{equation}
\begin{split} \label{poly4}
x^6 - 10x^4 - 7x^3 + 15x^2 + 14x + 3,
\end{split}
\end{equation}
a totally real polynomial with discriminant $19^2 \cdot 293^2$. We can also find the polynomial (\ref{poly4}) from the database of \cite{Kluners} and check that the discriminant of a root field of the polynimial(\ref{poly4}) is also $19^2 \cdot 293^2$. Then, $L$ is an $A_5$-extension over $\Q$ which is only ramified at $19$ and $293$. The  factorizations of the above polynomial modulo $19$ and $293$ are
\begin{displaymath}
x^6 - 10x^4 - 7x^3 + 15x^2 + 14x + 3 =(x+12)^2 (x+15)^2 (x^2+3 x+12)  \textrm{ mod } 19,
\end{displaymath}
\begin{displaymath}
x^6 - 10x^4 - 7x^3 + 15x^2 + 14x + 3 = (x+66)^2 (x+103) (x+160) (x+242)^2 \textrm{ mod } 293.
\end{displaymath}
Thus, $19$ and $293$ are the only primes ramified in this field with ramification index $2$. By Abhyankar's lemma, $LK_1/K_1$ is unramified at all primes, and $2$, $19$, and $293$ are the only primes ramified in $LK_1/\Q$ with ramification index $2$ (note that $22268=4 \cdot 19 \cdot 293$). Since $A_5$ is a nonabelian simple group, $L \cap K_1 =\Q$. Thus, $\Gal(LK_1/K_1) \simeq \Gal(L/\Q) \simeq A_5$, i.e., $LK_1$ is an unramified $A_5$-extension of $K_1$. We also know that $\Gal(LK_1/\Q) \simeq V_4 \times A_5$. Define $M$ as $LK_1$. 
\[
\xymatrix{
& M \ar@{-}[ddl] ^-{A_5}\ar  @/_5pc/ @{-}[ddddl]_-{A_5 \times V_4} \ar@{-}[dr]^-{V_4} \\
& & L  \ar@{-}[dddll]^-{A_5} \\
K_1 \ar@{-}[d] \ar @/_1pc/ [dd]_{V_4} \\
K \ar@{-}[d] \\
\mathbb{Q}
}
\]

\subsection{Determination of $\Gal(K_{ur}/K)$}
To prove Theorem \ref{thm:22268}, it suffices to show that $M$ possesses no non-trivial unramified extensions.
Since $M/K$ is unramified, the root discriminant of $M$ is $|d_M|^{1/n_M}=|d_K|^{1/n_K}=\sqrt{22268}= 149.2246...$ If we assume GRH, then $|d_M|^{1/n_M}=|d_K|^{1/n_K}=\sqrt{22268}= 149.2246... <153.252 \leq B(31970,31970,0)$ (see the table in \cite{Martinet-1980}). This implies that $[K_{ur}:M]<\frac{31970}{[M:\Q]}=133.2083..$.

We now first exclude the existence of non-trivial unramified abelian extensions of $M$. Suppose $M$ possesses such an extension $T/M$. Without loss, $T/M$ can be assumed cyclic of prime degree. Let $T'$ be its normal closure over $\Q$. Then $T'$ is unramified and elementary-abelian over $K_1$, and $\Gal(M/K_1)\simeq A_5$ acts on $\Gal(T'/M)$.
The following intermediate result is useful.
\begin{lem}
\label{lem:no_central}
If $T/M$ is an unramified cyclic $C_p$-extension, then the action of $A_5$ on $\Gal(T'/M)$ is faithful or $[T':M]=2$.
\end{lem}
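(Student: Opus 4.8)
The plan is to analyze the $A_5$-module $V := \Gal(T'/M) \cong (\F_p)^d$ and show that if the action is not faithful, then it must be trivial and $d = 1$ with $p = 2$. Since $A_5$ is simple, the kernel of the action is either trivial (faithful) or all of $A_5$ (trivial action). So suppose the action is trivial; I must then show $[T':M] = 2$. The key observation is that when $A_5$ acts trivially on $V = \Gal(T'/M)$, the extension $\Gal(T'/K_1)$ is a central extension of $\Gal(M/K_1) \cong A_5$ by the abelian group $V$.

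The main steps are as follows. First, I would invoke Proposition \ref{prop:B}: a central extension $1 \to V \to \Gal(T'/K_1) \to A_5 \to 1$ is either a stem extension, or $\Gal(T'/K_1)$ has a non-trivial abelian quotient. In the latter case, that abelian quotient corresponds to a non-trivial abelian unramified extension of $K_1$; but $K_1$ has trivial class group (stated earlier, since $K_1 = \Q(\sqrt{76},\sqrt{293})$ has class number one), a contradiction. Hence the extension must be a stem extension, so $V$ embeds in the Schur multiplier of $A_5$, which by Lemma \ref{lem:A} is $C_2$. Since $V$ is elementary abelian, this forces $V \cong C_2$, i.e.\ $p = 2$ and $[T':M] = 2$, as claimed.

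One subtlety to address carefully: I need $T'/K_1$ to be unramified (so that a non-trivial abelian quotient really yields an unramified abelian extension of $K_1$ contradicting $\Cl(K_1) = 1$). This is already noted in the setup — $T'/M$ is unramified by hypothesis, $M/K_1$ is unramified, so $T'/K_1$ is unramified. I should also make sure the extension $1 \to V \to \Gal(T'/K_1) \to A_5 \to 1$ is genuinely \emph{central}: this is exactly the assumption that the conjugation action of $A_5$ on $V$ is trivial, which is the case under consideration. A further point: $T'$ is the Galois closure of $T$ over $\Q$, and I should confirm $\Gal(T'/M)$ is the relevant module — but since $M/\Q$ is Galois, $T'/M$ makes sense and $\Gal(M/K_1) \cong A_5$ acts on it; the passage from "cyclic $C_p$ over $M$" to "elementary abelian over $M$" for $T'$ is the standard normal-closure argument already invoked in the text.

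The main obstacle is really just bookkeeping around which field the abelian quotient descends to: one must ensure that a non-trivial abelian quotient of $\Gal(T'/K_1)$ corresponds to an unramified abelian extension of $K_1$ (not merely of some larger field), which follows because $\Gal(T'/K_1)$ is itself a quotient of $\Gal(K_{1,ur}/K_1)$. Given the class field theory already assembled and Proposition \ref{prop:B} together with Lemma \ref{lem:A}, the argument is short; the only care needed is to keep track of centrality and the unramified condition simultaneously.
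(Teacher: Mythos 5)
Your argument is correct and coincides with the paper's proof: reduce to the trivial-action case by simplicity of $A_5$, observe that the resulting extension $1 \to \Gal(T'/M) \to \Gal(T'/K_1) \to A_5 \to 1$ is central, use Proposition \ref{prop:B} together with the unramifiedness of $T'/K_1$ and $\Cl(K_1)=1$ to force a stem extension, and conclude via Lemma \ref{lem:A}. (One cosmetic remark: for a stem extension of a perfect group the kernel is a \emph{quotient} of the Schur multiplier rather than a subgroup, but since that multiplier is $C_2$ here the distinction is immaterial.)
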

\begin{proof}
Since $A_5$ is simple, it suffices to exclude the case that the action of $A_5$ on $\Gal(T'/M)$ is trivial. In that case, the extension $1 \to \Gal(T'/M) \to \Gal(T'/K_1) \to A_5 \to 1$ would be a central extension. Assume that this extension is not a stem extension. In this case, $\Gal(T'/K_1)$ has a non-trivial abelian quotient by Proposition \ref{prop:B}. Since $T'/K_1$ is unramified, this contradicts the fact that $K_1$ has class number $1$. So the extension is a stem extension, whence Lemma \ref{lem:A} yields $\Gal(T'/M)\simeq C_2$.
\end{proof}
\begin{cor}
If $T/M$ is an unramified cyclic $C_p$-extension, then $\Gal(T'/M)$ is one of $(C_2)^{k}$ with $k\in \{1,4,5,6,7\}$, or $(C_3)^4$, or $(C_5)^3$.
\end{cor}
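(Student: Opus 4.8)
The plan is to combine Lemma~\ref{lem:no_central} with the degree bound $[K_{ur}:M] < 31970/[M:\Q] = 31970/240 < 134$ established above, and then to run through the handful of surviving possibilities using elementary facts about linear groups.

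First I would record the structure of $\Gal(T'/M)$. Since $T/M$ is cyclic of degree $p$ and $M/\Q$ is Galois (with group $V_4 \times A_5$), the normal closure $T'$ of $T$ over $\Q$ is the compositum over $M$ of finitely many $\Gal(M/\Q)$-conjugates of $T$, each a cyclic degree-$p$ extension of $M$; hence $T'/M$ is unramified, $\Gal(T'/M)$ is elementary abelian, say $\Gal(T'/M) \cong \F_p^n$ with $n \ge 1$, and $A_5 \cong \Gal(M/K_1)$ acts linearly on it. Since $M/K$ is unramified too, $T'/K$ is unramified, so $T' \subseteq K_{ur}$, whence $p^n = [T':M] \le [K_{ur}:M] \le 133$.

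Next I would split via Lemma~\ref{lem:no_central}. If $[T':M] = 2$, then $\Gal(T'/M) \cong C_2 = (C_2)^1$ and that case is done. Otherwise the $A_5$-action on $\F_p^n$ is faithful, i.e.\ $A_5 \hookrightarrow \GL_n(\F_p)$, and it remains to enumerate the pairs $(p,n)$ with $A_5 \hookrightarrow \GL_n(\F_p)$ and $p^n \le 133$. By Lemma~\ref{lem:14}, a nonabelian simple group embeds into neither $\GL_1(\F_p)$ nor $\GL_2(\F_p)$, so $n \ge 3$, and $p^3 \le 133$ then forces $p \in \{2,3,5\}$. For $p \in \{2,3\}$ one has $|\GL_3(\F_2)| = 168$ and $|\GL_3(\F_3)| = 11232$, neither divisible by $5$, while $5 \mid |A_5|$; hence $A_5 \not\hookrightarrow \GL_3(\F_p)$, so $n \ge 4$, and together with $p^n \le 133$ this leaves $(p,n) = (3,4)$ and $(p,n) = (2,n)$ with $n \in \{4,5,6,7\}$. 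For $p = 5$, since $5^4 = 625 > 133$, only $n = 3$ survives. Assembling the cases yields exactly $(C_2)^k$ for $k \in \{1,4,5,6,7\}$, together with $(C_3)^4$ and $(C_5)^3$.

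I do not expect a genuine obstacle here: once Lemma~\ref{lem:no_central} and the root-discriminant bound are available, the rest is bookkeeping. The one point needing a little care is $p = 5$, where $60 \mid |\GL_2(\F_5)|$ makes an order count useless and one genuinely needs Lemma~\ref{lem:14} to rule out $n = 2$; for $p \in \{2,3\}$ the exclusion of $n \le 3$ is a plain divisibility observation. Since the statement only claims an upper bound on the possible isomorphism types, I would not bother verifying that every listed group actually occurs, although each does, realized by the $4$-dimensional representation $A_5 \cong \SL_2(\F_4)$ over $\F_2$, the standard $4$-dimensional representation of $A_5$ over $\F_3$, and a $3$-dimensional representation over $\F_5$ (see Lemma~\ref{lem:15}), padded with trivial summands where necessary.
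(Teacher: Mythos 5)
Your proposal is correct and follows the same route as the paper: apply Lemma~\ref{lem:no_central} to split into the case $[T':M]=2$ and the case of a faithful $A_5$-action on the elementary abelian group $\Gal(T'/M)$, then enumerate the pairs $(p,n)$ with $A_5\hookrightarrow \GL_n(\F_p)$ and $p^n\le 133$, using Lemma~\ref{lem:14} to exclude $n=2$. The paper leaves this enumeration as ``easy to check''; your divisibility argument ruling out $\GL_3(\F_2)$ and $\GL_3(\F_3)$ is exactly the bookkeeping being elided.
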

\begin{proof}
Lemma \ref{lem:no_central} shows that either $[T':M]=2$, or $A_5$ embeds into $\Aut(\Gal(T'/M))$. Furthermore, we already know $[T':M]\le 133$. Now it is easy to check that only the above possibilities for $\Gal(T'/M)$ remain (see in particular Lemma \ref{lem:14}).
\end{proof}
We now treat the remaining cases one by one.
\subsubsection{$2$-class group of $M$}
With the above notation, suppose that $\Gal(T/M)\simeq C_2$.
Then, $T'/M$ is unramified and $\Gal(T'/M)$ is isomorphic to $(C_2)^m$ ($1 \leq m \leq 7$).

Let $E \subset L $ be a root field of the polynomial (\ref{poly4}) and $N$ be the compositum of $E$ and $K_1$, i.e., $N=EK_1$. Then $E$ can be defined by the composite of three polynomials: $x^2-19$, $x^2-293$ and the polynomial (\ref{poly4}). By computer calculation, $N$ is a root field of the following polynomial: 
\begin{equation}
\begin{split}
&x^{24} - 3784x^{22} - 28x^{21} + 6404076x^{20} + 53312x^{19} - 6401641814x^{18} - \\
& 31411548x^{17} + 4204260566526x^{16} - 5837238288x^{15} - 1908791963697448x^{14} + \\
& 18501271313028x^{13} + 613640140988085895x^{12} - 11975084172112012x^{11} - \\
& 140616516271183965910x^{10} + 4264300576327196748x^9 + \\
&22779186389906647652933x^8 -  932994735936411884988x^7 - \\
&2542792801321996372912890x^6 +  124393633255686127917612x^5 + \\
&185598619641359536180924174x^4 - 9237397310199896463461164x^3 - \\
&7951324489796939270027088092x^2 + 291464252731787840722883096x +\\
&151174316045577424616769218057
\end{split}
\end{equation}
We also know that $\Gal(M/N)$ is isomorphic to $D_5$.
\[
\xymatrix{
M \ar@{-}[d] \ar @/^1pc/ @{-}[dd]^-{A_5 \times V_4} \ar @/_1pc/ @{-}[d]_-{D_5}\\
N \ar@{-}[d] \\
\mathbb{Q}
}
\]
By computer calculation, we know that the class group of $N$ is isomorphic to $C_2$ under GRH. Let $N'$ be the Hilbert class field of $N$. (Note that $N'$ is a subfield of $M$, since $M/N$ is unramified.)
\[
\xymatrix{
M \ar@{-}[d] \ar @/^1pc/ @{-}[d]^-{C_5} \ar @/_1pc/ @{-}[dd]_-{D_5}\\
N' \ar@{-}[d] \ar @/^1pc/ @{-}[d]^-{C_2}  \\
N
}
\]
By Lemma \ref{2.6}, the $2$-class group of $N'$ is trivial. Thus the rank $m$ of the $2$-class group of $M$ is a multiple of $4$ by Lemma \ref{2.5}, i.e., $m$ is equal to $0$ or $4$.\\

Suppose that $m=4$. Then, $\Gal(T'/K_1)$ is an extension of $A_5$ by $(C_2)^4$. By Lemma \ref{lem:no_central},  $\Gal(M/K_1)$ acts faithfully on $\Gal(T'/M)$. Consider $\Gal(T'/K)$. This group is an extension of $\Gal(M/K) (\simeq A_5 \times C_2)$ by $\Gal(T'/M) (\simeq (C_2)^4)$ and an extension of $\Gal(K_1/K) (\simeq C_2)$ by $\Gal(T'/K_1)$ simultaneously. Therefore, it is natural to examine how $\Gal(K_1/K)$ acts on $\Gal(T'/M) (\simeq (C_2)^4)$. By Lemma \ref{lem:9}, $\Gal(M/K) (\simeq A_5 \times C_2)$ does not act faithfully on $\Gal(T'/M) (\simeq (C_2)^4)$. Since $\Gal(M/K_1) (\simeq A_5)$ acts nontrivially on $\Gal(T'/M)$, we obtain that $\Gal(K_1/K) (\simeq \Gal(M/LK))$ acts trivially on $\Gal(T'/M) (\simeq (C_2)^4)$.
\paragraph{$\Gal(T'/LK) \simeq (C_2)^5$}
Since $\Gal(M/LK)$ acts trivially on $\Gal(T'/M)$, $\Gal(T'/LK)$ is $(C_2)^3 \times C_4$ or $(C_2)^5$. Let $\Gal(T'/LK)$ be $(C_2)^3 \times C_4$. Then, $\Gal(T''/LK)$ is isomorphic to $(C_2)^4$, where $T''/LK$ is the maximal elementary abelian $2$-subextension of $T'/LK$. By the maximality of $T''$, $T''$ is also Galois over $\Q$ and $\Gal(T''/K)$ is an extension of $A_5$ by $(C_2)^4$. By restriction, this $A_5$-actions on $(C_2)^4$ comes from the $\Gal(M/K)$-actions on $\Gal(T'/M)$ mentioned above. Since $\Gal(T'/K_1)$ does not have any abelian quotient, $\Gal(T''/K)$ also has no abelian quotients, i.e., $T'' \cap K_1 =K$. Thus, $\Gal(T'/K)$ is a direct product of $\Gal(T''/K)$ and $\Gal(K_1/K)$, i.e., $\Gal(T'/LK)$ is a direct product of $\Gal(T''/LK) \simeq (C_2)^4$ and $\Gal(K_1/K)\simeq C_2$. This contradicts the fact that $\Gal(T'/LK)$ is $(C_2)^3 \times C_4$. Thus, $\Gal(T'/LK)$ is isomorphic to $(C_2)^5$, and there exists some $S/LK/K$ such that $SK_1=T'$ and $\Gal(S/K) \simeq (C_2)^4 \rtimes A_5$.

In a similar manner, we can prove that there exists some $S'/L/\Q$ such that $S'K_1=T'$ and $\Gal(S'/\Q) \simeq (C_2)^4 \rtimes A_5$.

Since $S'K$ is contained in $T'$, $S'K/K$ is an unramified extension. Therefore, the only ramified primes in $S'/L/\Q$ are $2$, $19$, and $293$ with ramification index $2$. Since $19$ and $293$ are already ramified in $L/\Q$, the only ramified prime in $S'/L$ is $2$.
\paragraph{Unramifiedness of $S'/L$} \label{unramifiedness}
Suppose that $2$ is ramified in $S'/L$. The ramification index of $2$ should then be $2$. Let $\bar{\p}$ (resp. $\p$) be a prime ideal in $S'$ (resp. $L$) satisfying $\bar{\p} | 2$ (resp. $\p|2$). The  factorization of the polynomial (\ref{poly4}) modulo $2$ is
\begin{equation}
\begin{split}
x^6 - 10x^4 - 7x^3 + 15x^2 + 14x + 3 \equiv (x+1) (x^5+x^4+x^3+x+1) \textrm{ mod }2.
\end{split}
\end{equation}
Thus, we know that $\Gal(L_{\p}/\Q_2)$ is isomorphic to $C_5 \simeq \langle (12345) \rangle$, where $L_{\p}$ is the $\p$-completion of $L$. Consider $\Gal(S'_{\bar{\p}}/L_{\p})$. Since the ramification index of $\p$ is $2$, $\Gal(S'_{\bar{\p}}/L_{\p})$ is $C_2$ or $(C_2)^2$, i.e., the proper subgroup of $(C_2)^4$. Hence, $\Gal(S'_{\bar{\p}}/\Q_3) = \Gal(S'_{\bar{\p}}/L_{\p}) \rtimes \langle (12345) \rangle \subsetneq (C_2)^4 \rtimes \langle (12345) \rangle$. This contradicts the statement that there is no proper subgroup of $(C_2)^4$ that is invariant under the action of $\langle (12345) \rangle$ (see Lemma \ref{lem:10}). Thus, $S'/L$ should be unramified at all places. In conclusion, $S'/\Q$ is a $(C_2)^4 \rtimes A_5$-extension of $\Q$ that has ramification index $2$ at only $19$ and $293$. Let us now consider the root discriminant of $S'$. Since $S'/L$ is unramified at all places,
\begin{displaymath}
|d_{S'}|^{1/n_{S'}}=|d_{L}|^{1/n_{L}}=(19^{30} \cdot 293^{30})^{1/60}= \sqrt{19\cdot 293} =74.6123......
\end{displaymath}
This implies that $|d_{S'}|^{1/n_{S'}} < 106.815..... \leq B(960,960,0)$ under the GRH (see the table in \cite{Martinet-1980}). This contradicts the definition of the lower bound for the root discriminant. Thus, the $2$-class group of $M$ is trivial.
\subsubsection{$3$-class group of $M$}\label{3-class}
Suppose that $T/M$ is an unramified $C_3$-extension. Then, as seen above, $T'$ is unramified over $M$ and $\Gal(T'/M)$ is isomorphic to $(C_3)^4$. 
Then, $\Gal(T'/\Q)$ is an extension of $\Gal(M/\Q) \simeq A_5 \times V_4$ by $(C_3)^4$. Therefore, it is natural to examine how $\Gal(M/\Q)$ acts on $\Gal(T'/M)\simeq (C_3)^4$. By Lemma \ref{lem:11} and Lemma \ref{lem:12}, we know that there are three possibilities of the actions of $\Gal(M/\Q)$ on $\Gal(T'/M)$. (Note that $\Aut((C_3)^4) \simeq \GL_4(\F_3)$). Each action is induced by the following three group homomorphisms $\psi : A_5 \times V_4 \to \GL_4(\F_3)$:\\

-$\psi$ is trivial.

-$\psi(A_5 \times V_4)\simeq A_5$.

-$\psi(A_5 \times V_4)\simeq A_5 \times C_2$.\\\\
By Lemma \ref{lem:no_central}, $\Gal(M/K_1)$ acts faithfully on $\Gal(T'/M)$. Therefore, $\psi$ cannot be trivial.
\paragraph{$\psi(A_5 \times V_4)\simeq A_5$} \label{5.2.2.1}
This means that $\Gal(M/K_1)(\simeq A_5)$ acts nontrivially on $\Gal(T'/M)$ and $\Gal(M/L) \simeq V_4$ acts trivially on $\Gal(T'/M)$. Since $|\Gal(T'/M)|$ and $|\Gal(M/L)|$ are coprime, $\Gal(T'/L)$ is isomorphic to $V_4 \times (C_3)^4$. Let $S$ be the subfield of $T'$ fixed by $V_4$. Then $\Gal(S/\Q)$ is a group extension of $A_5$ by $(C_3)^4$.
\[
\xymatrix{
& T'\ar@{-}[ddl]  \ar@{-}[dr] \ar @/_1pc/  @{-}[ddl]_{V_4 \times (C_3)^4} \ar @/^1pc/ @{-}[dr]^-{V_4} \\
& & S \ar@{-}[ddl] \\
L\ar@{-}[dr] \ar @/_1pc/  @{-}[dr]_{A_5}\\
&\mathbb{Q}
}
\]
 Since $19$ and $293$ are already ramified in $L/\Q$, the only ramified prime in $S/L$ is $2$. If $2$ is ramified in $S/L$, its ramification index should be $2$. But it is impossible, because the degree of $[S:L]$ is odd. Thus $S/L$ is unramified over all places. By a similar argument as in \ref{unramifiedness}, we can check that this contradicts the definition of the lower bound for the root discriminant.
\paragraph{$\psi(A_5 \times V_4)\simeq A_5 \times C_2$}
\label{3class_homomorphisms}
First of all, let us see the intermediate fields in $M/L$. Since $\Gal(M/L)$ is isomorphic to $V_4$, there are three proper intermediate fields in $M/L$.
\[
\xymatrix{
&M \ar@{-}[dl] \ar@{-}[dr] \ar@{-}[d] \\
L(\sqrt{76}) \ar@{-}[dr] & LK\ar@{-}[d]& L(\sqrt{293}) \ar@{-}[dl]\\
& L
}
\]

Suppose that $\Gal(M/L(\sqrt{76}))$ acts trivially on $\Gal(T'/M)$. This means that $\Gal(T'/L(\sqrt{76}))$ is isomorphic to $C_2 \times (C_3)^4$, i.e., there exists a subfield $S$ in $T'/L(\sqrt{76})$ such that $\Gal(S/L(\sqrt{76}))$ is isomorphic to $(C_3)^4$.
\[
\xymatrix{
S  \ar@{-}[d] \ar @/^1pc/ @{-}[d]^-{(C_3)^4}\\
L(\sqrt{76}) \ar@{-}[d] \ar @/^1pc/ @{-}[d]^-{A_5}\\
\Q(\sqrt{76}) \ar@{-}[d] \ar @/^1pc/ @{-}[d]^-{C_2}\\
\Q
}
\]
We easily check that $S/L(\sqrt{76})$ is unramified over all places. Let $\bar{\p}$ (resp. $\p'$, $\p$) be a prime ideal in $S$ (resp. $L(\sqrt{76})$, $\Q(\sqrt{76})$) satisfying $\bar{\p} | 2$ (resp.$\p'|2$, $\p|2$). We had already show that the factorization of the polynomial (\ref{poly4}) modulo $2$ is
\begin{equation}
\begin{split}
x^6 - 10x^4 - 7x^3 + 15x^2 + 14x + 3 \equiv (x+1) (x^5+x^4+x^3+x+1) \textrm{ mod }2.
\end{split}
\end{equation}
Thus, we know that $\Gal(L(\sqrt{76})_{\p'}/\Q(\sqrt{76})_{\p})$ is isomorphic to $C_5 \simeq \langle (12345) \rangle$, where $L(\sqrt{76})_{\p'}$ (resp. $\Q(\sqrt{76})_{\p}$) is the $\p'$-completion of $L(\sqrt{76})$ (resp. the $\p$-completion of $\Q(\sqrt{76})_{\p}$). 

Let us consider $\Gal(S_{\bar{\p}}/L(\sqrt{76})_{\p'})$. We know that $S/L(\sqrt{76})$ is unramified. Thus, $S_{\bar{\p}}/L(\sqrt{76})_{\p'}$ is a cyclic extension, i.e., $\Gal(S_{\bar{\p}}/L(\sqrt{76})_{\p'})$ is isomorphic to $C_3$ or a trivial group.

Suppose that $\Gal(S_{\bar{\p}}/L(\sqrt{76})_{\p'})$ is isomorphic to $C_3$. Then $\Gal(S_{\bar{\p}}/\Q(\sqrt{76})_{\p})=\Gal(S_{\bar{\p}}/L(\sqrt{76})_{\p'}) \rtimes \langle (12345) \rangle \subsetneq (C_3)^4 \rtimes \langle (12345) \rangle$. This contradicts the statement that there is no proper subgroup of $(C_3)^4$ that is invariant under the action of $\langle (12345) \rangle$ (See Lemma \ref{lem:13}). In conclusion, $\Gal(S_{\bar{\p}}/L(\sqrt{76})_{\p'})$ is trivial. 

Thus, for a number field $S/\Q$, $e_2=2$ and $f_2=5$ where $e_2$ is the ramification index of $2$ and $f_2$ is the inertia degree for $2$. Let us recall the function (\ref{f})
\begin{displaymath}
f=2\sum_{\p}\sum^{\infty}_{m=1}\frac{\log N(\p)}{N(\p)^{m/2}}F(\log N(\p)^m).
\end{displaymath}
Since every term of $f$ is greater than or equal to $0$, the following holds for the number field $S$.
\begin{equation}
\begin{split}
f \geq 2\sum^{972}_{j=1}\sum^{100}_{i=1}\frac{\log N(\bar{\q}_j)}{N(\bar{\q}_j)^{i/2}}F(\log N(\bar{\q}_j)^i),
\end{split}
\end{equation}
where the $\bar{\q}_j$ denote the prime ideals of $S$ satisfying $\bar{\q}_j | 2$. Since $f_2=5$, $N(\bar{\q}_j)=2^5$ for all $j$. Set $b=8.8$. By a numerical calculation, we have
\begin{equation}
\begin{split}
f \geq 2\cdot 972 \sum^{100}_{i=1}\frac{\log 2^5}{2^{5i/2}}F(\log 2^{5i})=1111.46....
\end{split}
\end{equation}
Let us recall (\ref{D}). For $b=8.8$, we have
\begin{equation}
\begin{split}
|d_{S}|^{1/n_{S}}&>149.272\cdot e^{(f-604.89)/9720}\\
&\geq 149.272\cdot e^{(1111.46-604.89)/9720}=157.258....
\end{split}
\end{equation}
$|d_{S}|^{1/n_{S}}=|d_K|^{1/n_K}=\sqrt{22268}$ contradicts the fact that $|d_{S}|^{1/n_{S}}= 149.2246...$

Next, suppose that $\Gal(M/LK)$ acts trivially on $\Gal(T'/M)$. This means that $\Gal(T'/LK)$ is isomorphic to $C_2 \times (C_3)^4$, i.e., there exists a subfield $S'$ in $T'/LK$ such that $\Gal(S'/LK)$ is isomorphic to $(C_3)^4$.
\[
\xymatrix{
S'  \ar@{-}[d] \ar @/^1pc/ @{-}[d]^-{(C_3)^4}\\
LK \ar@{-}[d] \ar @/^1pc/ @{-}[d]^-{A_5}\\
K \ar@{-}[d] \ar @/^1pc/ @{-}[d]^-{C_2}\\
\Q
}
\]
By the same argument as in the above, we can get
\begin{equation}
\begin{split}
|d_{S'}|^{1/n_{S}}&>157.258....
\end{split}
\end{equation}
and this contradicts the fact that $|d_{S}|^{1/n_{S}}= 149.2246...$.

Finally, suppose that $\Gal(M/L(\sqrt{293}))$ acts trivially on $\Gal(T'/M)$. This means that $\Gal(T'/L(\sqrt{293}))$ is isomorphic to $C_2 \times (C_3)^4$, i.e., there exists a subfield $S''$ in $T'/L(\sqrt{293})$ such that $\Gal(S''/L(\sqrt{293}))$ is isomorphic to $(C_3)^4$.
\[
\xymatrix{
S''  \ar@{-}[d] \ar @/^1pc/ @{-}[d]^-{(C_3)^4}\\
L(\sqrt{293}) \ar@{-}[d] \ar @/^1pc/ @{-}[d]^-{A_5}\\
\Q(\sqrt{293}) \ar@{-}[d] \ar @/^1pc/ @{-}[d]^-{C_2}\\
\Q
}
\]
We easily know that $19$ and $293$ are the only ramified primes in $S''/\Q$. By a similar argument as in section \ref{5.2.2.1}, we can check that this contradicts the definition of the lower bound for the root discriminant.\\

In conclusion, the $3$-class group of $M$ is trivial.
\subsubsection{$5$-class group of $M$}
Suppose that $T/M$ is an unramified $C_5$-extension. Then, $T'$ is unramified over $M$ and $\Gal(T'/M)$ is isomorphic to $(C_5)^3$. Thus, $\Gal(T'/\Q)$ is an extension of $\Gal(M/\Q) \simeq A_5 \times V_4$ by $(C_5)^3$. Therefore, it is natural to examine how $\Gal(M/\Q)$ acts on $\Gal(T'/M)\simeq (C_5)^3$. By Lemma \ref{lem:15} and Lemma \ref{lem:16}, we know that there are three possibilities of the actions of $\Gal(M/\Q)$ on $\Gal(T'/M)$. Each action is induced by the following three group homomorphisms $\psi : A_5 \times V_4 \to \GL_3(\F_5)$.:\\

-$\psi$ is trivial.

-$\psi(A_5 \times V_4)\simeq A_5$.

-$\psi(A_5 \times V_4)\simeq A_5 \times C_2$.\\\\
By a similar argument as in section \ref{3-class}, we just need to think about the case $\psi(A_5 \times V_4)\simeq A_5 \times C_2$.
\paragraph{$\psi(A_5 \times V_4)\simeq A_5 \times C_2$}
Consider again the intermediate fields of $M/L$ as in \S \ref{3class_homomorphisms}.
Suppose that $\Gal(M/L(\sqrt{76}))$ acts trivially on $\Gal(T'/M)$. This means that $\Gal(T'/L(\sqrt{76}))$ is isomorphic to $C_2 \times (C_5)^3$, i.e., there exists a subfield $S$ in $T'/L(\sqrt{76})$ such that $\Gal(S/L(\sqrt{76}))$ is isomorphic to $(C_5)^3$.
\[
\xymatrix{
S  \ar@{-}[d] \ar @/^1pc/ @{-}[d]^-{(C_5)^3}\\
L(\sqrt{76}) \ar@{-}[d] \ar @/^1pc/ @{-}[d]^-{A_5}\\
\Q(\sqrt{76}) \ar@{-}[d] \ar @/^1pc/ @{-}[d]^-{C_2}\\
\Q
}
\]
From \cite{Kluners}, we know that $L$ can also be defined as the splitting field of following polynomial, corresponding to an imprimitive degree-$12$ action of $A_5$:
\begin{equation}\label{poly12}
\begin{split}
& x^{12} + 11x^{11} - 59x^{10} - 647x^9 - 295x^8 + 5446x^7 + 4294x^6 -\\
&  14727x^5 - 4960x^4 + 16477x^3 - 4028x^2 - 1813x + 324.
\end{split}
\end{equation}
Let $E\subset L$ be a root field of the polynomial $(\ref{poly12})$. We know that the discriminant $d_E$ of $E$ is $19^6 \cdot 293^6$. Since $|d_E|^{1/n_E}=|d_L|^{1/n_L}$, $L/E$ is unramified.

Define $N$ as the compositum of $E$ and $\Q(\sqrt{76})$. Then $N$ is a subfield of $L(\sqrt{76})$ and $\Gal(L(\sqrt{76})/N)$ is isomorphic to $C_5$.
\[
\xymatrix{
S  \ar@{-}[d] \ar @/^1pc/ @{-}[d]^-{(C_5)^3}\\
L(\sqrt{76}) \ar@{-}[d] \ar @/^1pc/ @{-}[d]^-{C_5}\\
N
}
\]
By Abhyankar's lemma, we easily know that $L(\sqrt{76})/N$ is unramified. Using a computer calculation, we can check that $N$ is a root field of the following polynomial:
\begin{equation}\label{poly24-1}
\begin{split}
& x^{24} - 111x^{22} + 4394x^{20} - 83286x^{18} + 818659x^{16} - 4122356x^{14} + 9878557x^{12} - \\
&10688099x^{10} + 5561624x^8 - 1360039x^6 + 130854x^4 - 2499x^2 + 1.
\end{split}
\end{equation}

\[
\xymatrix{
&T' \ar@{-}[dl]  \ar@{-}[dr] \ar@{-}[dd]^-{C_2 \times (C_5)^3}\\
M \ar@{-}[dr]_{C_2} & & S \ar@{-}[dl]^-{(C_5)^3}\\
& L(\sqrt{76})\ar@{-}[d]^-{C_5}\\
& N
}
\]
By the calculation of sage, we can check that the class group of $N$ is equal to $C_{10}$, i.e., $5$-class group of $N$ is $C_5$ and Hilbert $5$-class field of $N$ is $L(\sqrt{76})$.  We know that $\Gal(T'/L(\sqrt{76}))$ is isomorphic to $C_2 \times (C_3)^5$ i.e., $5$-class group of $L(\sqrt{76})$ is not trivial. This contradicts Lemma \ref{2.6}.\\

Suppose that $\Gal(M/LK)$ acts trivially on $\Gal(T'/M)$. Define $N'$ as the compositum of $E$ and $K$. Then $N$ can be defined by the following polynomial:
\begin{equation}\label{poly24-2}
\begin{split}
& x^{24} - 98x^{22} + 4073x^{20} - 94476x^{18} + 1354898x^{16} - 12553566x^{14} + \\
&76075696x^{12} - 297782263x^{10} + 723063287x^8 - 1000608193x^6 + \\
&654400814x^4 - 110097135x^2 + 3818116.
\end{split}
\end{equation}
By a computer calculation with Magma, we can check, assuming GRH, that the class group of $N$ is equal to $C_{10}$, i.e., the $5$-class group of $N$ is $C_5$ and the Hilbert $5$-class field of $N$ is $LK$. By the same argument as above, we obtain a contradiction. \\

Suppose that $\Gal(M/L(\sqrt{293}))$ acts trivially on $\Gal(T'/M)$. This means that $\Gal(T'/L(\sqrt{293}))$ is isomorphic to $C_2 \times (C_5)^3$, i.e., there exists a subfield $S''$ in $T'/L(\sqrt{293})$ such that $\Gal(S''/L(\sqrt{293}))$ is isomorphic to $(C_5)^3$, and such that $19$ and $293$ are the only ramified primes in $S''/\Q$.
By a similar argument as in section \ref{5.2.2.1}, we can check that this contradicts the lower bound for the root discriminant.

In conclusion, $5$-class group of $M$ is also trivial under the assumption of the GRH. We have therefore obtained:
\begin{prop}
The class number of $M$ is $1$, under the assumption of the GRH.
\end{prop}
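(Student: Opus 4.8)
\emph{Overall strategy.} The plan is to establish the equivalent statement that $M$ has no non-trivial unramified abelian extension. If a prime $p$ divided $\Cl(M)$, class field theory would produce an unramified cyclic $C_p$-extension $T/M$, and by the Corollary preceding this proposition its normal closure $T'/\Q$ would satisfy $\Gal(T'/M)\in\{(C_2)^k:k\in\{1,4,5,6,7\}\}\cup\{(C_3)^4,(C_5)^3\}$. In particular this already forces $p\in\{2,3,5\}$: the Martinet GRH root-discriminant table bounds $[K_{ur}:M]$ by $31970/[M:\Q]=133.2\ldots$, and Lemma \ref{lem:no_central} together with Lemma \ref{lem:14} then eliminates every other possibility. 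So it remains to kill the $2$-, $3$- and $5$-parts of $\Cl(M)$ one at a time.

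\emph{The prime $2$.} First I would pin down the $2$-rank of $\Cl(M)$. Taking the $D_5$-subextension $N\subset M$ over $\Q$ (the compositum of a root field of (\ref{poly4}) with $K_1$), a computer calculation gives $\Cl(N)\simeq C_2$ under GRH, so its Hilbert class field $N'$ lies in $M$, has trivial $2$-class group by Lemma \ref{2.6}, and applying the $p$-rank theorem (Lemma \ref{2.5}) to the degree-$5$ cyclic extension $M/N'$ forces the $2$-rank of $\Cl(M)$ to be $\equiv 0\pmod 4$, hence $0$ or $4$. Suppose it is $4$. Then $\Gal(M/K_1)\simeq A_5$ acts faithfully on $(C_2)^4=\Gal(T'/M)$ by Lemma \ref{lem:no_central}, i.e.\ as a subgroup of $\GL_4(\F_2)=A_8$; Lemma \ref{lem:9} shows $A_5\times C_2$ admits no faithful such action, forcing $\Gal(K_1/K)$ to act trivially, and a maximality argument on elementary-abelian subextensions then forces $\Gal(T'/LK)\simeq(C_2)^5$ and hence a $(C_2)^4\rtimes A_5$-extension $S'/L/\Q$ ramified (index $2$) only above $19$ and $293$. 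Since the decomposition group of $2$ in $L/\Q$ is $C_5=\langle(12345)\rangle$, which has no proper invariant subspace in $(\F_2)^4$ (Lemma \ref{lem:10}), the extension $S'/L$ must be unramified everywhere, whence $|d_{S'}|^{1/n_{S'}}=\sqrt{19\cdot293}\approx74.6<B(960,960,0)\approx106.8$, contradicting Martinet's GRH bound. So the $2$-part is trivial.

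\emph{The primes $3$ and $5$.} Here $\Gal(T'/M)\simeq(C_3)^4$ resp.\ $(C_5)^3$, and the $A_5\times V_4$-action lands in $\GL_4(\F_3)$ resp.\ $\GL_3(\F_5)$. By Lemmas \ref{lem:11}--\ref{lem:13} resp.\ \ref{lem:15}--\ref{lem:16}, together with Lemma \ref{lem:no_central}, the image is $A_5$ or $A_5\times C_2$. If it is $A_5$, then $\Gal(M/L)$ acts trivially, and since $|\Gal(M/L)|$ is coprime to $|\Gal(T'/M)|$ one extracts a $\Gal(T'/M)\rtimes A_5$-extension $S/L/\Q$; as $[S:L]$ is odd, $2$ cannot ramify, so $S/L$ is unramified and its root discriminant is again too small. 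If the image is $A_5\times C_2$, then exactly one of the three quadratic subfields $L(\sqrt{76})$, $LK$, $L(\sqrt{293})$ of $M/L$ has Galois group acting trivially; the $L(\sqrt{293})$ case is finished by a discriminant bound since $19,293$ are already ramified in the base, while for $L(\sqrt{76})$ and $LK$ the auxiliary field is unramified but $19,293$ ramify lower, so the crude bound fails — there I would use the explicit splitting of $2$ (inertia degree $5$) to get a strong lower bound for the correction term $f$ of (\ref{f}) with $b=8.8$, giving $|d_S|^{1/n_S}>157>149.2\ldots$ when $p=3$, and for $p=5$ write down the explicit degree-$24$ polynomials (\ref{poly24-1}), (\ref{poly24-2}) for the relevant subfields $N$, compute $\Cl(N)\simeq C_{10}$ under GRH, and contradict Lemma \ref{2.6} (the $5$-class group of $L(\sqrt{76})$ resp.\ $LK$ would then be non-trivial while sitting inside a Hilbert-class-field-type extension of $N$). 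Assembling the three parts: no prime divides $\Cl(M)$, so $\Cl(M)=1$.

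\emph{Main obstacle.} The hard part is the $A_5\times C_2$ case in degrees $3$ and $5$: there the everywhere-unramified auxiliary field has root discriminant too large for a one-line Odlyzko/Martinet bound, so one must either extract extra mileage from the precise local behaviour of $2$ through the $G$-function correction term in Martinet's GRH table, or fall back on GRH-conditional class-group computations for explicit degree-$24$ fields and Taussky's lemma. Getting the $\GL_4(\F_3)$- and $\GL_3(\F_5)$-group theory exactly right — which conjugacy classes of $A_5\times C_2$ embed, and that $A_5\times V_4$ does not — is the other delicate ingredient, supplied by Lemmas \ref{lem:11}, \ref{lem:12}, \ref{lem:15}, \ref{lem:16}.
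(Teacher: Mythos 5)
Your proposal is correct and follows essentially the same route as the paper: the Corollary to Lemma \ref{lem:no_central} to reduce to $p\in\{2,3,5\}$, the $p$-rank theorem and Taussky's lemma applied to $N=EK_1$ to pin the $2$-rank at $0$ or $4$, the descent to a $(C_2)^4\rtimes A_5$-extension of $L$ killed by Lemma \ref{lem:10} and the root-discriminant bound, and for $p=3,5$ the case analysis on $\psi(A_5\times V_4)$ with the refined Martinet correction term (inertia degree of $2$) resp.\ the GRH class-group computation for the degree-$24$ fields and Lemma \ref{2.6}. No substantive differences from the paper's argument.
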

\subsubsection{$A_5$-unramified extension of $M$}\label{A5}
Since the class number of $M$ is one, there is no solvable unramified extension over $M$. The last thing we have to do is to show that there is no nonsolvable unramified extension over $M$. Since $[K_{ur}:M]<133.2083..$, our task is to show that K does not admit an unramified $A_5$-extension.

Suppose that $M$ admits an unramified $A_5$-extension $F$. Because $[K_{ur}:M]<134$, $F$ is the unique unramified $A_5$-extension of $M$, i.e., $F$ is Galois over $\Q$. It is well known that $A_5$ is isomorphic to $\PSL_2(\F_5)$ and $S_5$ is isomorphic to $\PGL_2(\F_5)$. By Lemma \ref{lem:suzuki}, $\Gal(F/K_1) \simeq A_5 \times A_5$, i.e., $K_1$ admits another $A_5$-unramified extension $F_1$. 
\[
\xymatrix{
F \ar@{-}[d] \ar @/_1pc/ @{-}[d]_-{A_5} \ar @/_5pc/ @{-}[dd]_-{A_5 \times A_5} \ar@{-}[dr]\\
M \ar@{-}[d] \ar @/_1pc/ @{-}[d]_-{A_5} & F_1 \ar@{-}[dl] \ar @/^1pc/ @{-}[dl]^-{A_5}\\
K_1
}
\]
(Note that $F_1$ is also Galois over $\Q$, or otherwise $K_1$ would have further unramified $A_5$-extensions, contradicting Odlyzko's bound.) Then, by Lemma \ref{lem:suzuki}, there are only two possibilities for $\Gal(F_1/K)$: $A_5 \times C_2$ or $S_5$.

\paragraph{Case 1 - $\Gal(F_1/K) \simeq A_5 \times C_2$ }
By a similar argument in the above, $K$ admits an $A_5$-unramified extension $F_2$. Then, $\Gal(F_2/\Q)$ is also isomorphic to $A_5 \times C_2$ or $S_5$.
\paragraph{Case 1.1 - $\Gal(F_2/\Q) \simeq A_5 \times C_2$ }
\label{case11}
This implies that there exists an $A_5$-extension $F_3/\Q$ with all ramification indices $\le 2$ and unramified outside of $\{2,19,293\}$. Assume first that $19$ is unramified in $F_3/\Q$. Let $E$ be a quintic subfield of $F_3/\Q$. Then, by a well-known result of Dedekind, we get the upper bound $|d_E|\le 2^6\cdot 293^2 < 5.5\cdot 10^{6}$ for the discriminant of $E$. However, from Table 2 in \cite[Section 4.1]{Schwarz-1994} no extension with this discriminant bound and ramification restrictions exists. We may therefore assume that $19$ is ramified in $F_3/\Q$. Since its inertia group is generated by a double transposition in $A_5$, the inertia degree of $19$ in the extension $F_2/\Q$ (with Galois group $A_5 \times C_2$) is at most $2$. The same holds for the inertia degree of $19$ in the extension $L/\Q$, and therefore eventually also in the compositum $LF_2/\Q$.

Let us recall the function (\ref{f})
\begin{displaymath}
f=2\sum_{\p}\sum^{\infty}_{m=1}\frac{\log N(\p)}{N(\p)^{m/2}}F(\log N(\p)^m).
\end{displaymath}
Since every term of $f$ is greater than or equal to $0$, the following holds for the number field $LF_2$.
\begin{equation}
\begin{split}
f \geq 2\sum^{1800}_{j=1}\sum^{100}_{i=1}\frac{\log N(\bar{\q}_j)}{N(\bar{\q}_j)^{i/2}}F(\log N(\bar{\q}_j)^i),
\end{split}
\end{equation}
where the $\bar{\q}_j$ denote the prime ideals of $LF_2$ satisfying $\bar{\q}_j | 19$. Since $f_{19}=2$, $N(\bar{\q}_j)=19^2$ for all $j$. Set $b=8.8$. By a numerical calculation, we have
\begin{equation}
\begin{split}
f \geq 2\cdot 1800 \sum^{100}_{i=1}\frac{\log 19^2}{19^{i}}F(\log 19^{2i})=683.225....
\end{split}
\end{equation}
Let us recall (\ref{D}). For $b=8.8$, we have
\begin{equation}
\begin{split}
|d_{LF_2}|^{1/n_{LF_2}}&>149.272\cdot e^{(f-604.89)/7200}\\
&\geq 149.272\cdot e^{(683.225-604.89)/7200}=150.905....
\end{split}
\end{equation}
$|d_{LF_2}|^{1/n_{LF_2}}=|d_K|^{1/n_K}=\sqrt{22268}$ contradicts the fact that $|d_{LF_2}|^{1/n_{LF_2}}= 149.2246...$

\paragraph{Case 1.2 - $\Gal(F_2/\Q) \simeq S_5$ }
By the unramifiedness of $F_2/K$, and since the only involutions of $S_5$ not contained in $A_5$ are the transpositions, a quintic subfield $E$ of $F_2$ must have the discriminant $22268$. However,  such a quintic number field does not exist, from \cite{Schwarz-1994}. This is a contradiction.
\paragraph{Case 2 - $\Gal(F_1/K) \simeq S_5$ }
By Lemma \ref{lem:suzuki}, $\Gal(F_1/\Q) \simeq S_5 \times C_2$. Consequently, $F_1$ is the compositum of $K$ and an $S_5$-extension $F_2$ of $\Q$. 
Furthermore, $F_2/\Q$ has a quadratic subextension contained in $K_1$, but linearly disjoint from $K$. Therefore it is either $\Q(\sqrt{293})$ or $\Q(\sqrt{76})$. Consider now a quintic subfield $E$ of $F_2/\Q$. Of course, $E/\Q$ is unramified outside $\{2,19,293\}$. Furthermore, all non-trivial inertia subgroups are generated either by transpositions or by double transpositions. Finally, the inertia subgroups at those primes which ramify in the quadratic subfield of $F_2/\Q$ are generated by transpositions. By a similar argument as in \S\ref{case11}, we then get one of the following two upper bounds for the discriminant of $E$: Either $|d_E| \le 2^3\cdot 19 \cdot 293^2$ (namely, if the quadratic subfield is $\Q(\sqrt{76})$), or $|d_E| \le 2^6\cdot 19^2 \cdot 293$.
 Such a quintic number field does not exist, from \cite[Section 4.1]{Schwarz-1994}. This is a contradiction.

In conclusion, $M$ admits no unramified $A_5$-extensions, i.e., we have that $\Gal(K_{ur}/K_1) \cong A_5$ under the assumption that the GRH holds. This concludes the proof of Theorem \ref{thm:22268}.
\section{Appendix: $K=\Q(\sqrt{-1567})$}
Until now, we dealt with real quadratic fields. In this section, we will give the first case of an imaginary quadratic field. 	

Let $K$ be the imaginary quadratic number field $\Q(\sqrt{-1567})$. We show the following:
\begin{thm}
\label{thm:1567}
Let $K$ be the imaginary quadratic field $\Q(\sqrt{-1567})$ and $K_{ur}$ be its maximal unramified extension Then $\Gal(K_{ur}/K)$ is isomorphic to $\PSL_2(\F_8) \times C_{15}$ under the assumption of the GRH..
\end{thm}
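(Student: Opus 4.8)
\textbf{Proof proposal for Theorem \ref{thm:1567}.}
The plan is to mirror the structure of the proof of Theorem \ref{thm:22268}, but now with the building blocks adapted to the group $\PSL_2(\F_8)$ and the cyclic factor $C_{15}$. First I would locate, using the database \cite{Kluners}, a $\PSL_2(\F_8)$-extension $\widetilde{L}/\Q$ whose ramified primes and ramification indices combine with the quadratic field $K=\Q(\sqrt{-1567})$ (note $1567$ is prime) so that, after compositing with the Hilbert class field tower $K_1$ of $K$, one obtains an unramified $\PSL_2(\F_8)$-extension $M/K_1$ (Abhyankar's lemma will be used here exactly as in Section \ref{sec:ex4}). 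Simultaneously one must account for the $C_{15}$ factor: the class group of $K$, respectively the relevant layers of its class field tower, should contribute a cyclic piece of order $15$, so that $M$ is the compositum of the $\PSL_2(\F_8)$-part with an unramified cyclic degree-$15$ extension, and $\Gal(M/K)$ already contains $\PSL_2(\F_8)\times C_{15}$ (up to the precise arrangement over $K$ versus $K_1$). The core claim to be proved is then that $M$ has no further non-trivial unramified extension.

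Next I would run the root-discriminant/Odlyzko-Martinet machinery: since $M/K$ is unramified, $|d_M|^{1/n_M}=\sqrt{1567}=39.58\ldots$, which under GRH lies below the Martinet bound $B(\,\cdot\,)$ for the relevant degree, giving an explicit upper bound on $[K_{ur}:M]$. With that bound in hand, I would first exclude non-trivial unramified \emph{abelian} extensions $T/M$: passing to the normal closure $T'/K_1$, the group $\Gal(M/K_1)\simeq\PSL_2(\F_8)$ acts on the elementary-abelian group $\Gal(T'/M)\simeq(C_\ell)^k$, and the analogue of Lemma \ref{lem:no_central} (using Proposition \ref{prop:B}, $K_1$ having class number related to $C_{15}$, and Lemma \ref{lem:b} giving the Schur multiplier of $\PSL_2(\F_8)$, which is trivial) forces the action to be faithful except possibly in a small central case. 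Since $\Gal(T'/M)$ has bounded order, only finitely many $(\ell,k)$ survive; each is then killed by a combination of the $p$-rank theorem (Lemma \ref{2.5}), Taussky's lemma (Lemma \ref{2.6}), explicit class-group computations of well-chosen subfields (via Magma/Sage, as in Section \ref{sec:ex4}), local factorization arguments at the small ramified primes forcing an unramified subextension whose root discriminant violates the Martinet bound, and the group-theoretic facts of Section \ref{group} on embeddings into $\GL_5(\F_2)$, $\GL_6(\F_2)$, $\GL_3(\F_5)$ etc.\ (Lemmas \ref{lem:17}, \ref{lem:18}, \ref{lem:19}), which control which faithful actions of $\PSL_2(\F_8)$ on small vector spaces can occur.

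Once $M$ is shown to have class number matching exactly the cyclic $C_{15}$ already accounted for (so no unexpected solvable extension), the final task is to exclude a further unramified \emph{nonsolvable} extension $F/M$. Given the bound on $[K_{ur}:M]$, the only candidate Galois group of $F/M$ is again a small nonabelian simple group — presumably $\PSL_2(\F_8)$ or $A_5$ — and here Lemma \ref{lem:suzuki} (applied since $\PSL_2(\F_8)$ and $A_5$ have trivial center) pins down the possible groups $\Gal(F/K_1)$ and $\Gal(F/K)$, each of which is then eliminated by producing a low-degree subfield with forced discriminant and ramification restrictions that contradict the tables of low-degree number fields (\cite{Schwarz-1994} or its analogues) or, failing that, by another root-discriminant estimate on a suitable compositum. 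The main obstacle I anticipate is the same one flagged in the introduction: the group theory for $\PSL_2(\F_8)$ inside the various $\GL_n(\F_q)$ is more delicate than the $A_5$ case, so the bulk of the work will be verifying that the only faithful linear actions of $\PSL_2(\F_8)$ (and of $\PSL_2(\F_8)\times C_2$, etc.) on the small elementary-abelian groups permitted by the degree bound are the ones we can rule out by centralizer/inertia-degree arguments — this is where Lemmas \ref{lem:17}--\ref{lem:19} do the heavy lifting, and extending them to cover every surviving case is the crux.
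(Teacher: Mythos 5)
Your proposal follows essentially the same route as the paper: construct $M$ as the compositum of $K_1$ (which supplies the $C_{15}$ via $\Cl(K)\simeq C_{15}$ and has class number one) with a $\PSL_2(\F_8)$-extension of $\Q$ ramified only at $1567$, bound $[K_{ur}:M]$ by the GRH Martinet bound, force faithfulness of the $\PSL_2(\F_8)$-action on any unramified elementary-abelian layer via the trivial Schur multiplier, reduce to the single case $(C_2)^6$ using Lemmas \ref{lem:14} and \ref{lem:17}--\ref{lem:19}, kill it by the local argument at $2$ plus a discriminant estimate, and finally exclude an unramified $A_5$-extension via Lemma \ref{lem:suzuki} and the tables of quintic and $A_5$ fields. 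The only detail worth adding is that the degree bound $[K_{ur}:M]<67$ already rules out $\PSL_2(\F_8)$ (order $504$) as a further nonsolvable layer, leaving $A_5$ as the sole candidate.
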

The class number of $K$ is $15$, i.e., $\Cl(K) \simeq C_{15}$. Let $K_1$ be the Hilbert class field of $K$.
\subsection{Class number of $K_1$}
The first thing we have to do is show that the class number of $K_1$ is one. 
It can be computed that $K_1$ is the splitting field of the polynomial
\begin{equation}
\begin{split}
&x^{15} + 14x^{14} + 56x^{13} + 105x^{12} + 497x^{11} + 832x^{10} + 1157x^9 + 1274x^8 +\\
& 644x^7 - 971x^6 - 2582x^5 - 177x^4 + 7x^3 + 1187x^2 - 20x + 1
\end{split}
\end{equation}
We can then check with Magma that the class number of $K_1$ is $1$, under GRH.
\subsection{An unramified $\PSL_2(\F_8)$-extension of $K_1$}
Let $K=\Q(\sqrt{-1567})$ and let $L$ be the splitting field of
\begin{equation}
\begin{split} \label{poly3}
x^9 - 2x^8 + 10x^7 - 25x^6 + 34x^5 - 40x^4 + 52x^3 - 45x^2 + 20x - 4,
\end{split}
\end{equation}
a polynomial with complex roots. Then $L$ is a $\PSL_2(\F_8)$-extension of $\Q$ and $1567$ is the only prime ramified in this field with ramification index two. By Abhyankar's lemma, $LK/K$ is unramified at all primes. Since $\PSL_2(\F_8)$ is a nonabelian simple group, $L \cap K_1 =\Q$. So $\Gal(LK_1/K_1) \simeq \Gal(L/\Q) \simeq \PSL_2(\F_8)$, i.e., $LK_1$ is a $\PSL_2(\F_8)$-extension of $K_1$ which is unramified over all places. It follows that $\Gal(LK_1/\Q)$ is isomorphic to $\PSL_2(\F_8) \times D_{15}$.
\[
\xymatrix{
& K_1 L \ar@{-}[ddl] ^-{\PSL_2(\F_8)}\ar  @/_5pc/ @{-}[ddddl]_-{\PSL_2(\F_8) \times D_{15}} \ar@{-}[dr]^-{D_{15}} \\
& & L  \ar@{-}[dddll]^-{\PSL_2(\F_8)} \\
K_1 \ar@{-}[d] \ar @/_1pc/ [dd]_{D_{15}} \\
K \ar@{-}[d] \\
\mathbb{Q}
}
\]
\subsection{The determination of $\Gal(K_{ur}/K)$}
Define $M$ as $LK_1$. Since $M/K$ is unramified at all places, the root discriminant of $M$ is $|d_{M}|^{1/|M|}=|d_{K}|^{1/{|K|}}=\sqrt{1567}= 39.5853...$.  If we assume GRH, then $|d_{M}|^{1/|M|}=|d_{K}|^{1/{|K|}}=\sqrt{1567}= 39.5853<39.895...= B(1000000,0,500000)$. (See the table in \cite{Martinet-1980}). This imply that $[K_{ur}:M]<\frac{1000000}{[M:\Q]}=66.1375...$. We now proceed similarly as in Section \ref{sec:ex4}. Let $T$ be a non-trivial unramified $C_p$-extension of $M$, and let $T'$ be its Galois closure over $\Q$ First, we obtain the following analog of Lemma \ref{lem:no_central}.
\begin{lem}
\label{lem:no_central2}
If $T/M$ is a non-trivial unramified cyclic $C_p$-extension, then the action of $\PSL_2(\F_8)$ on $\Gal(T'/M)$ is faithful.
\end{lem}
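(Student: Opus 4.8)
The plan is to follow the proof of Lemma~\ref{lem:no_central} almost verbatim, the one simplification being that the Schur multiplier of $\PSL_2(\F_8)$ is trivial rather than $C_2$, which will eliminate the exceptional alternative entirely. First I would set up the group theory. Since $M/\Q$ is Galois, every $\Q$-conjugate $T^{\sigma}$ of $T$ contains $M^{\sigma}=M$, and $T^{\sigma}/M$ is again cyclic of degree $p$; hence $T'$, being the compositum of these conjugates, is Galois over $M$ with $\Gal(T'/M)$ an elementary abelian $p$-group. As $K_1/\Q$ is Galois, $\Gal(T'/M)$ is normal in $\Gal(T'/K_1)$ with quotient $\Gal(M/K_1)\simeq\PSL_2(\F_8)$, and since $\Gal(T'/M)$ is abelian, conjugation induces a well-defined action of $\PSL_2(\F_8)$ on it. Finally, $T/K_1$ is unramified (a tower of the unramified extensions $T/M$ and $M/K_1$), hence so is every conjugate $T^{\sigma}/K_1$, and therefore so is their compositum $T'/K_1$.

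Since $\PSL_2(\F_8)$ is simple, its action on $\Gal(T'/M)$ is either faithful or trivial, so it suffices to rule out the trivial case. If the action were trivial, then $1\to \Gal(T'/M)\to \Gal(T'/K_1)\to \PSL_2(\F_8)\to 1$ would be a central extension with finite abelian kernel. By Proposition~\ref{prop:B}, either this is a stem extension or $\Gal(T'/K_1)$ has a non-trivial abelian quotient; the latter is impossible because $T'/K_1$ is unramified while $K_1$ has class number $1$. So it is a stem extension; but $\PSL_2(\F_8)$ is perfect, so $\Gal(T'/M)$ is then a quotient of its Schur multiplier. By Lemma~\ref{lem:b}, since $\PSL_2(\F_8)$ is not among the listed exceptions, that multiplier is cyclic of order $\gcd(2,8-1)=1$, i.e.\ trivial, which forces $T'=M$ and contradicts the non-triviality of $T/M$. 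Hence the action is faithful.

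I do not expect a genuine obstacle here: the argument is essentially bookkeeping plus the two inputs (triviality of $H_2(\PSL_2(\F_8),\Z)$, via Lemma~\ref{lem:b}, and $h(K_1)=1$). The only points deserving care are checking that $T'/K_1$ is unramified and that the $\PSL_2(\F_8)$-action on $\Gal(T'/M)$ is well defined; both are routine, the latter because the kernel $\Gal(T'/M)$ is abelian, so inner automorphisms arising from it are trivial and conjugation descends to $\Gal(T'/K_1)/\Gal(T'/M)\simeq\PSL_2(\F_8)$.
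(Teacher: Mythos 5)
Your proof is correct and follows exactly the paper's route: the paper's own proof of this lemma simply says ``as in Lemma \ref{lem:no_central}, using additionally that $\PSL_2(\F_8)$ has trivial Schur multiplier (Lemma \ref{lem:b})'', which is precisely the simplicity/central-extension/stem-extension argument you reproduce. The extra setup you include (Galois-ness and unramifiedness of $T'/K_1$, well-definedness of the action) is routine bookkeeping the paper leaves implicit.
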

\begin{proof}
As in Lemma \ref{lem:no_central}, and using additionally that $\PSL_2(\F_8)$ has trivial Schur multiplier (see Lemma \ref{lem:b}).
\end{proof}
\begin{cor}
If $T/M$ is a non-trivial unramified cyclic $C_p$-extension, then $p=2$ and $\Gal(T'/M)\simeq (C_2)^6$.
\end{cor}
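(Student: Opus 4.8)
The plan is to leverage the degree bound $[K_{ur}:M]<66.14$ together with the fact that $\PSL_2(\F_8)$ has no small faithful linear representation in any characteristic. First I would check that the Galois closure $T'$ of $T/\Q$ already lies in $K_{ur}$: since $M/\Q$ is Galois, each $\Q$-conjugate $T^\sigma$ of $T$ is again an unramified extension of $M$, so their compositum $T'$ is unramified over $M$, hence over $K$; thus $[T':M]\le [K_{ur}:M]\le 66$. Writing $T'$ as the compositum of the conjugates $T^\sigma$ over $M$, the restriction maps embed $\Gal(T'/M)$ into a finite product of copies of $C_p$, so $\Gal(T'/M)\cong(C_p)^n$ for some $n\ge 1$ with $p^n\le 66$. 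Finally, Lemma \ref{lem:no_central2} tells us the conjugation action of $\Gal(M/K_1)\cong\PSL_2(\F_8)$ on $\Gal(T'/M)$ is faithful, so $\PSL_2(\F_8)$ embeds into $\Aut((C_p)^n)\cong\GL_n(\F_p)$, and it only remains to decide for which $(p,n)$ with $p^n\le 66$ this is possible.

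For odd $p$ this is pure order-counting. If $p\ge 7$ then $p^n\le 66$ forces $n=1$, but $\GL_1(\F_p)$ is cyclic, contradicting that $\PSL_2(\F_8)$ is a nonabelian simple group. If $p=5$ then $n\le 2$ and $|\GL_2(\F_5)|=480<504=|\PSL_2(\F_8)|$, contradicting Lagrange. If $p=3$ then $n\le 3$, and since $7\nmid|\GL_3(\F_3)|=2^5\cdot 3^3\cdot 13$ whereas $7\mid|\PSL_2(\F_8)|$, no embedding of $\PSL_2(\F_8)$ into any $\GL_n(\F_3)$ with $n\le 3$ can exist. Hence $p=2$.

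It then remains to treat $p=2$, where $2^n\le 66$ gives $n\le 6$. For $n\le 3$ one has $|\GL_n(\F_2)|\le|\GL_3(\F_2)|=168<504$, which is impossible. The genuinely delicate cases are $n=4$ and $n=5$, which Lagrange cannot settle: for $n=4$ I would use the exceptional isomorphism $\GL_4(\F_2)\cong A_8$ (Proposition \ref{prop:a}) and observe that $A_8$ contains no element of order $9$, while $\PSL_2(\F_8)=\SL_2(\F_8)$ has a cyclic subgroup of order $\frac{8^2-1}{8-1}=9$; and for $n=5$ I would invoke Lemma \ref{lem:17}, which asserts precisely that $\GL_5(\F_2)$ contains no copy of $\PSL_2(\F_8)$. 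This forces $n=6$, i.e.\ $\Gal(T'/M)\cong(C_2)^6$, which is exactly the claim; by Lemma \ref{lem:18} this surviving case is genuinely possible, so one should not expect to eliminate it here. The main obstacle is thus concentrated in the $p=2$ analysis — equivalently, in knowing that the minimal dimension of a faithful $\F_2$-representation of $\PSL_2(\F_8)$ is exactly $6$ — since everything else reduces to elementary estimates once the bound $[T':M]\le 66$ and the elementary-abelian structure of $\Gal(T'/M)$ have been established.
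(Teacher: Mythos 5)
Your proof is correct and follows essentially the same route as the paper: faithfulness of the $\PSL_2(\F_8)$-action on $\Gal(T'/M)$ (Lemma \ref{lem:no_central2}), the elementary-abelian structure of $\Gal(T'/M)$ together with the bound $[T':M]\le 66$, and the non-embeddability of $\PSL_2(\F_8)$ into the resulting small groups $\GL_n(\F_p)$. The only cosmetic difference is that for $p=2$, $n=4$ you invoke $\GL_4(\F_2)\cong A_8$ and the absence of order-$9$ elements, whereas the paper subsumes this case under Lemma \ref{lem:17} (since $\GL_4(\F_2)\le \GL_5(\F_2)$), whose proof rests on the very same order-$9$ observation.
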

\begin{proof}
Use Lemma \ref{lem:no_central2}, the bound $[T':M]\le 66$, and Lemmata \ref{lem:14} and \ref{lem:17} in order to obtain that $(C_2)^6$ is the only elementary-abelian group in the relevant range which allows a non-trivial $\PSL_2(\F_8)$-action. 
\end{proof}
We deal with the remaining case below.
\subsubsection{$2$-class group of $M$}
Suppose that $M$ has an unramified $C_2$-extension $T$ and let $T'$ be its normal closure over $\Q$. As shown above, $T'$ is unramified over $M$ and $\Gal(T'/M)$ is isomorphic to $(C_2)^6$.
\[
\xymatrix{
& T'\ar@{-}[ddl]  \ar@{-}[dr] \ar @/_1pc/  @{-}[ddl]_{D_{15} \times (C_2)^6} \ar @/^1pc/ @{-}[dr]^-{D_{15}} \\
& & L' \ar@{-}[ddl] \\
L\ar@{-}[dr] \ar @/_1pc/  @{-}[dr]_{\PSL_2(\F_8)}\\
&\mathbb{Q}
}
\]
Let $\bar{\p}$ (resp. $\p$) be a prime ideal in $L'$ (resp. $L$) satisfying $\bar{\p} | 2$ (resp. $\p|2$). The  factorization of the polynomial (\ref{poly3}) modulo $2$ is
\begin{equation}
\begin{split}
x^2 (x^7 + x^4 + 1) \textrm{ mod }2.
\end{split}
\end{equation}
Since $\PSL_2(\F_8)$ contains no elements of order $14$, we thus know that $\Gal(L_{\p}/\Q_2)$ is isomorphic to $C_7$, where $L_{\p}$ is the $\p$-completion of $L$.
Consider $\Gal(L'_{\bar{\p}}/L_{\p})$. Because $L'/L$ is unramified, $\Gal(L'_{\bar{\p}}/L_{\p})$ is either trivial or $C_2$. 
\[
\xymatrix{
L' \ar@{-}[d] \ar @/^1pc/ @{-}[d]^-{(C_2)^6}\\
L 
}
\]
By Lemma \ref{lem:18}, there is a unique class of subgroups $\PSL_2(\F_8)$ inside $\GL_6(\F_2)$. The cyclic subgroups of order $7$ in these subgroups act fixed-point-freely on $(C_2)^6$ (in fact, the vector space decomposes into a direct sum of two irreducible modules of dimension $3$ under their action). Therefore, the corresponding group extension of $C_7$ by $(C_2)^6$ has trivial center, and in particular contains no element of order $14$.
Thus, $\Gal(L'_{\bar{\p}}/L_{\p})$ is trivial, i.e., $\p$ splits completely in $L'$. \\

Define $S$ to be the compositum $L'K$. Since $-1567 \equiv 1$ modulo $8$, $2$ splits completely in $K$. Then, for the number field $S/\Q$, we have that $f_2=7$, where $f_2$ is the inertia degree of $2$. Let us recall the function (\ref{f}) again.
\begin{displaymath}
f=2\sum_{\p}\sum^{\infty}_{m=1}\frac{\log N(\p)}{N(\p)^{m/2}}F(\log N(\p)^m).
\end{displaymath}
Since every term of $f$ is greater than or equal to $0$, the following holds for the number field $S$.
\begin{equation}
\begin{split}
f \geq 2\sum^{9216}_{j=1}\sum^{100}_{i=1}\frac{\log N(\bar{\q}_j)}{N(\bar{\q}_j)^{i/2}}F(\log N(\bar{\q}_j)^i),
\end{split}
\end{equation}
where the $\bar{\q}_j$ denote the prime ideals of $S$ satisfying $\bar{\q}_j | 2$. Since $f_2=7$, $N(\bar{\q}_j)=2^7$ for all $j$. Set $b=11.6$. By a numerical calculation, we have
\begin{equation}
\begin{split}
f \geq 2\cdot 9216 \sum^{100}_{i=1}\frac{\log 2^7}{2^{7i/2}}F(\log 2^{7i})=6814.41....
\end{split}
\end{equation}
Let us recall (\ref{D}). For $b=11.6$, we have
\begin{equation}
\begin{split}
|d_{S}|^{1/n_{S}}&>39.619 \cdot e^{(f-4790.3)/64512}\\
&\geq 39.619 \cdot e^{(6814.41-4790.3)/64512}=40.8818....
\end{split}
\end{equation}
Since $S/K$ is unramified, $|d_{S}|^{1/n_{S}}=|d_K|^{1/n_K}=\sqrt{1567}=39.5853....$. This is a contradiction.
Therefore, the $2$-class group of $M$ is trivial. In conclusion, the class number of $M$ is one. 

\subsubsection{$A_5$-unramified extension of $M$}
Since $[K_{ur}:M]<66.1375..$, our final task is to show that $M$ does not admit an unramified $A_5$-extension.
By an analogous argument as in section \ref{A5}, $K$ admits an $A_5$-extension $F$ and $\Gal(F/\Q)$ is also isomorphic to $A_5 \times C_2$ or $S_5$.
\paragraph{Case 1 - $\Gal(F/\Q) \simeq A_5 \times C_2$ }
This implies that there exists an $A_5$-extension $F_1/\Q$ with ramification index $2$ at $1567$, and unramified at all other finite primes. However, from the tables in \cite{Basmaji-1994} no such extensions exists. This is a contradiction.
\paragraph{Case 2 - $\Gal(F/\Q) \simeq S_5$}
By the unramifiedness of $F/K$, a quintic subfield $E$ of $F$ must have the discriminant $-1567$. However the minimal negative discriminant of quintic fields with Galois group $S_5$ is $-4511$ (\cite[Table 3]{Schwarz-1994}. This is a contradiction.


Therefore, we know that $K_{ur}=M$ under the assumption of the GRH. This concludes the proof of Theorem \ref{thm:1567}.
\\
\\
{\textbf{Acknowledgments.}} The second author was supported by the Israel Science Foundation (grant No. 577/15).
\bibliographystyle{amsplain}

\end{document}